\definecolor{darkred}{rgb}{0.9,0.1,0.1}
\newtheorem{proposition}{Proposition}
\newtheorem{theorem}{Theorem}
\newtheorem{lemma}{Lemma}
\newtheorem{remark}{Remark}
{\rm}
\definecolor{darkred}{rgb}{0.9,0.1,0.1}
\def\eps{\varepsilon}
\definecolor{darkred}{rgb}{0.9,0.1,0.1}
\author{A. Piatnitski$^{\small\bf a,b}$ and E. Zhizhina$^{\small\bf b,a}$\\
\\
{\small $^{\small\bf a}$ UiT, The Arctic University of Norway, Campus Narvik,}\\[-1.5mm]
{\small P.O.Box 385, Narvik 8505, Norway}\\[2.5mm]
{\small $^{\small\bf b}$Institute for Information Transmission Problems of RAS,}\\[-1.5mm]
{\small 19, Bolshoi Karetnyi per. build.1,}\\[-1.5mm]
{\small 127051 Moscow, Russia}\\
$ $}
\begin{document}

\title{High-contrast periodic random jumps in continuum and limit Markov process
}

\maketitle

\noindent{\sl Keywords:}\ high-contrast periodic media, random jump process, generator, memory kernel, correctors.

\bigskip
\noindent{\sl MSC:}\ 47D06, 47D07, 60J25, 60F17.

\begin{abstract}
The paper deals with the asymptotic properties of a random jump process in a high contrast periodic medium in $\mathbb R^d$, $d\geq 1$. 
We show that if the coordinates of the random jump process in $\mathbb R^d$ are equipped with an extra variable that characterizes the position of the process inside the period,
then the limit dynamics of this two-component process
is Markov. We describe the limit process and prove the convergence in law in the path space.
We show that the components of the limit process are coupled and
derive the evolution equation with a memory term  for the spatial component of the process.
We also discuss the construction of the limit process in the $L^2$ space and study the spectrum of the generator of the limit semigroup.

\end{abstract}

\noindent

\section*{Introduction}

We study in this work the scaling limit of random jump processes in $\mathbb R^d$, $d\geq 1$, in periodic  high-contrast media. More precisely, we assume that the transition intensities of the random jumps depend on a small parameter $\varepsilon>0$ in such a way that they are
 of order one for the points located in a given periodic open set
 in $\mathbb R^d$, and are of order $\eps^2$ in the complement of this set.   We consider a continuous time Markov process $ \mathfrak{X}_\eps(t)$ with these transition rates and say that the process is "fast" \ in the mentioned periodic
 set and "slow" in its complement.
The goal of this work is to study the scaling limit of the process $ \mathfrak{X}_\eps(t)$ and to describe the limit process.
To this end we introduce the process $X_\varepsilon(t)=\varepsilon  \mathfrak{X}_\eps(t/\varepsilon^2)$
and investigate its asymptotic behaviour  as $\varepsilon\to 0$.


The main strategy of our approach is to rearrange the process $X_\varepsilon(t)$ in such a way that the limit dynamics remains Markovian. This idea was realized in our previous works for lattice random walks \cite{JSP} and for diffusion processes \cite{AA} in high contrast environments.
It turns out that in order to keep the Markovity of the limit process
one can equip the coordinate process $X_\eps(t)$ with an additional variable, $k_\eps( X_\eps(t))$, that specifies
the position of the random walk in the period. This variable is defined as the fractional part of  $\eps^{-1}X_\eps(t)$. We denote the slow part of the period by $\overline{G}$, identify all the points of the fast
set and call them  $\star$. Then $k_\eps( X_\eps(t))$ takes on values in $G^\star = \overline{G}\cup \star$. Notice that  $G^\star$ is a compact state space.   Although in the original process $(X_\eps(t), k_\eps( X_\eps(t)))$
the last component is a function of $X_\eps(t)$, in the limit process the last component is independent
of the other component.
The limit process  is a two-component continuous time Markov process $\mathcal{X}(t) = ({X}(t), k(t))$ taking on values in a space $E=\mathbb R^d\times G^\star$. The  first component ${X}(t)$ of $\mathcal{X}(t)$ lives in the space $\mathbb R^d$, while the second component is a jump Markov process $k(t)$ with a compact  state space $G^\star$.
 The process $k(t)$
does not depend on $X(t)$.  When $k(t) = \star$,  the first
component $X(t)$ evolves along the trajectories of a Brownian motion in $\mathbb R^d$, while when $k(t) \neq \star$, then the first
component remains still until the second component of the process
takes again the value equal to $\star$. Thus the trajectories of  ${X}(t)$  coincide with  the trajectories of a Brownian diffusion in $\mathbb R^d$ during the time when  $k(t)=\star$.  As long as $k(t) \neq \star$, then ${X}(t)$ does
not move, only the second component of the process evolves.

One of the main goals of this paper is to construct the limit Markov process with sample paths in $D_E[0, \infty)$.
The Feller semigroups are usually used to construct the associated Markov processes. In Sect. 2, we define the Feller semigroup on the Banach space $C_0(E)$ of continuous functions on $E$ and study the semigroup convergence in  $C_0(E)$. It is important to note that despite the fact that the limit semigroup acts on $C_0(E)$, the approximating semigroups do not need to be defined on $C_0(E)$, see arguments for the construction of the limit Markov process in Theorem \ref{T1}.  We also study the convergence of the semigroups in $L^2(E)$ and provide the spectral analysis of the generator of the limit semigroup.

%


Our approach relies on a combination of approximation results from \cite{EK} and corrector techniques from the homogenization theory.  A crucial step here is constructing several periodic correctors which are introduced as solutions of auxiliary equations on the period.


Homogenization problems for uniformly elliptic symmetric convolution-type operators in periodic environments have been  studied in \cite{SIMA17}, similar problem in the non-symmetric case has been addressed in
\cite{PZ19}.  $\Gamma$-convergence results for nonlocal uniformly coercive convolution-type functionals were obtained in \cite{BP21}, \cite{BP22}.

For the second order elliptic and parabolic differential operators the double porosity model was originally studied in \cite{ADH90} and then, by means of the two-scale convergence method, in \cite{Ale92}. At present there is a vast literature on this topic.

The discrete double porosity model has been considered in \cite{JSP},  and by the variational approach in \cite{BCP15} .

In \cite{Zhi00} double porosity models defined on singular periodic structures were investigated; also, in the cited work
the limit behaviour of the spectrum of a high contrast elliptic operator with a periodic microstructure has been described.




The paper is organized as follows. In Section \ref{Sec1} we formulate the problem and provide all the conditions
on the coefficients of the studied operators and on the microscopic geometry. The goal of Section \ref{sec2} is to show
that the semigroups corresponding to the extended processes converge and to characterize the limit semigroup.
Here we deal with operators  whose resolvent acts in the space of functions that are continuous both on slow and fast
phases. In Section \ref{sec3}  we prove the convergence in law of the processes in the Skorokhod topology of $D_E [0, \infty)$.  The limit behaviour of the coordinate process 
is studied in Section \ref{sec4}. We emphasize that the limit coordinate process need not be Markov and derive the evolution equation with the memory term for the spatial component ${X}(t)$ of the limit process $\mathcal{X}(t)$.
In Section \ref{sec5} we consider the problem in the $L^2$ framework. Here we discuss the semigroup convergence, describe
the generator of the limit semigroup and study the spectrum of the limit semigroup generator.

\section{Problem setup}
\label{Sec1}

Consider a convolution-type  operator $A$ defined by
\begin{equation}\label{L}
A u(x) = \int_{\mathbb{R}^d} a(x-y) \Lambda(x,y) \big( u(y) - u(x) \big) dy.
\end{equation}
We assume that the convolution kernel $a(z)$ has the following properties:
\begin{equation}\label{a-1}
a(z) \ge 0, \quad  a(-z) = a(z), \quad a \in L_1(\mathbb{R}^d) \cap L^\infty (\mathbb{R}^d),
\quad \int_{\mathbb{R}^d} |z|^3 a(z) dz < \infty;
\end{equation}
\begin{equation}\label{a-1_low}
\exists \; r_0>0 \; \mbox{ and } \; c>0 \; \mbox{ such that } \;  a(z)\geq c\quad\hbox{if }|z|\leq r_0.
 \end{equation}
The finiteness of the third moment of $a$ 
specified in \eqref{a-1} is an excessively strong condition. We impose this condition to simplify the proof of the main result, namely Proposition \ref{P1}. In fact, it is sufficient to require the finiteness of the second moment $ \int_{\mathbb{R}^d} |z|^2 a(z) dz$. Then the result remains valid, however the proof is getting more complicated, see e.g. \cite{SIMA17}.

Concerning
$\Lambda(x,y)$ we suppose that it is a periodic positive function describing high-contrast environments, see  \eqref{Lambda-eps} below.

Notice that $A$ is the generator of a continuous time Markov jump process $ \mathfrak{X}(t)$ in $\mathbb{R}^d$ with the intensity of  jumps $x \to y$ equal to $ p(x,y)=a(x-y) \Lambda(x,y)$.

\medskip
In this paper we consider a family of Markov jump processes $ \mathfrak{X}_\eps(t)$ with transition intensities $p^{(\varepsilon)}(x,y)$ that depend on a small parameter $\varepsilon>0$ and, for each value of the parameter, satisfy the properties formulated above.
%
In order to introduce $p^{(\varepsilon)}(x,y)$ assume that on the periodicity cell $\mathbb T^d$ there are two open
sets $G$ and $Y$ with  Lipschitz continuous boundaries such that
\begin{equation}\label{defGY}
\mathbb T^d = \overline{G} \cup Y; \quad G,\, Y \neq \emptyset, \; G \cap Y =   \emptyset,
\end{equation}
and that, furthermore,  the periodic extension of $Y$ denoted by $Y^{\sharp}$
is unbounded and connected.
Here the connectedness is understood in terms of the  function $a(z)$; we say that two points $x',\,x''\in Y^{\sharp}$ are connected if there exists a path $x^1,\ldots,x^M$ in
$Y^{\sharp}$ such that $x^1=x'$, $x^M=x''$ and $a(x^j-x^{j+1})>0$ for all $j=1,\ldots, M-1$.
We also denote by $G^\sharp$ the periodic extension of $G$. Then
$\mathbb R^d = \overline{G}^{\sharp} \cup Y^{\sharp}$.

The sets  $\varepsilon G^{\sharp}$ and $\varepsilon Y^{\sharp}$ are obtained by the dilatation of  $G^{\sharp}$ and $Y^{\sharp}$:
$$
\varepsilon G^{\sharp}=\{x\in\mathbb R^d\,:\,\varepsilon^{-1}x\in G^{\sharp}\},\qquad
\varepsilon Y^{\sharp}=\{x\in\mathbb R^d\,:\,\varepsilon^{-1}x\in Y^{\sharp}\}.
$$
Then
\begin{equation}\label{decZ}
\mathbb{R}^d = \varepsilon\overline{G}^{\sharp} \cup \varepsilon Y^{\sharp}.
\end{equation}
Let  $L^\infty_0 (\mathbb R^d)$ be the Banach space of measurable bounded
functions on $\mathbb R^d$ vanishing at infinity: $\mathop{\mathrm{esssup}}\limits_{|x|>R}|u|\to0$ as $R\to\infty$.
This space is equipped with the norm $\|u \| = \mathop{\mathrm{esssup}}\limits_{x \in \mathbb R^d} |u(x)|$. Define the rescaled operator $A_\varepsilon$ as follows
\begin{equation}\label{Leps}
A_\varepsilon u(x) = \frac{1}{\varepsilon^{d+2}} \int_{\mathbb{R}^d} a_\varepsilon(x-y) \Lambda_\varepsilon(x,y) \big( u(y) - u(x) \big) dy, \quad u \in L^\infty_0 (\mathbb R^d),
\end{equation}
where we denote
$$
a_\varepsilon(x-y) =  a(\frac{x-y}{\varepsilon}), \quad \Lambda_{\varepsilon}(x,y) = \Lambda(\frac{x}{\varepsilon}, \frac{y}{\varepsilon}), \quad x,y \in \mathbb{R}^d = \varepsilon\overline{G}^{\sharp} \cup \varepsilon Y^{\sharp}.
$$
In what follows we identify periodic functions  with the functions defined on $d$-dimensional torus $\mathbb T^d$.

The kernel $\Lambda_\varepsilon$ describes  the high-contrast periodic structure.
Namely, assume that $ \Lambda_\varepsilon$ admits the representation
\begin{equation}\label{Lambda-eps}
\Lambda_\varepsilon (x,y) = \Lambda^0\Big(\frac x\varepsilon,\frac y\varepsilon\Big) + \varepsilon^2 v\Big(\frac x\varepsilon,\frac y\varepsilon\Big),
\end{equation}
where both $\Lambda^0(\xi,\eta)$ and $ v(\xi,\eta)$, $\ \xi = \frac x\varepsilon, \, \eta = \frac y\varepsilon$,
are periodic functions with period $[0,1)^d$:
$$
\Lambda^0(\xi,\eta) = \Lambda^0(\xi+k,\eta+n), \quad  v(\xi,\eta) =  v(\xi+k,\eta+n), \qquad \forall \; k, \, n \in \mathbb{Z}^d,
$$
and
\begin{equation}\label{Lambda0}
\Lambda^0 (\xi,\eta) = {\bf{1}}_{ Y^{\sharp}}(\xi) \cdot {\bf{1}}_{ Y^{\sharp}}(\eta),
\end{equation}

\begin{equation}\label{v-eps}
 v(\xi, \eta) = w (\xi, \eta)  \big( 1- {\bf{1}}_{Y^{\sharp}}(\xi) \cdot {\bf{1}}_{Y^{\sharp}}(\eta) \big), \;\; \xi, \eta \in \mathbb{T}^d,
\end{equation}
with
\begin{equation}\label{vv}
0< \alpha_1 \le w(\xi, \eta) \le \alpha_2 < \infty \quad \mbox{ for all } \; \xi, \eta \in \mathbb{T}^d.
\end{equation}
Here  ${\bf 1}$ stands for the characteristic function.
We also assume that
\begin{equation}\label{w}
w\in C \big(\mathbb{T}^d \times \mathbb{T}^d \big).  
\end{equation}
Denote $\Lambda^0_\varepsilon (x,y) =
\Lambda^0(\frac{x}{\varepsilon}, \frac{y}{\varepsilon})$ and $v_\varepsilon (x,y) = v(\frac{x}{\varepsilon}, \frac{y}{\varepsilon})$.
Thus representations \eqref{Lambda0} - \eqref{v-eps} imply that
$\Lambda^0_\varepsilon (x,y) = 0$, if $x\in  \varepsilon G^{\sharp}$ or  $y \in  \varepsilon G^{\sharp}$, and
$v_\varepsilon(x,y) =0$ if both $x\in  \varepsilon Y^{\sharp}$ and  $y \in  \varepsilon Y^{\sharp}$.
Owing to \eqref{Lambda-eps} the operator in \eqref{Leps} can be written as
\begin{equation}\label{Leps-sum}
A_\varepsilon = A^0_\varepsilon + V_\varepsilon,
\end{equation}
where
\begin{equation}\label{L0eps}
A^0_\varepsilon u(x) = \frac{1}{\varepsilon^{d+2}} \int_{\mathbb{R}^d} a_\varepsilon(x-y) \, \Lambda^0_\varepsilon(x,y) \, \big( u(y) - u(x) \big) dy,
\end{equation}
\begin{equation}\label{Veps}
V_\varepsilon u(x) = \frac{1}{\varepsilon^{d}} \int_{\mathbb{R}^d} a_\varepsilon(x-y) \, v_\varepsilon(x,y)\, \big( u(y) - u(x) \big) dy.
\end{equation}
For each $\varepsilon>0$, \eqref{Leps} defines a bounded linear operator  $A_\varepsilon$ on $L^\infty_0 (\mathbb{R}^d)$, and  $A_\varepsilon$ is the generator of a continuous time Markov jump process  $X_\varepsilon(t) = \varepsilon \mathfrak{X}_\eps(t/\varepsilon^2)$
on $\mathbb R^d = \varepsilon \overline{G}^{\sharp} \cup \varepsilon Y^{\sharp}$ with the intensity of jumps $ \eps^{-d-2} a_\varepsilon(x-y) \, \Lambda_\varepsilon(x,y) $. The goal of this work is to study the behaviour of this process  as $\varepsilon\to 0$, and to construct the limit semigroup and the limit process.

\section{Semigroup convergence in $C_0(E)$}
\label{sec2}

In this section we equip  the random jump process $X_\varepsilon (t)$ with an additional component,
and for the extended process we prove the convergence of the corresponding semigroups.

 For each $\varepsilon>0$ we introduce a mapping $k_\varepsilon(x)$ from $\mathbb R^d$ to $G^\star = \overline{G} \cup \{\star\}$
defined by
\begin{equation}\label{kx}
k_\varepsilon(x) \ = \ \left\{
\begin{array}{l}
\star \; \; \mbox{ if } \;  x \in \varepsilon Y^{\sharp}, \\
\big\{\frac{x}{\varepsilon} \big\} \in \overline{G} \;  \mbox{ if } \;  x \in \varepsilon \overline{G}^{\sharp}.
\end{array}
\right.
\end{equation}
With this construction in hands we  introduce the metric space
\begin{equation}\label{Ee}
E_\varepsilon \ = \  \left\{ (x, k_\varepsilon(x)), \; x \in \mathbb R^d,\; k_\varepsilon
(x) \in  G^\star \right\}, \quad E_\varepsilon \subset \mathbb R^d \times G^\star,
\end{equation}
with a metric that coincides with the metric in  $\mathbb R^d$ for the first component of $(x,k_\varepsilon(x)) \in E_\varepsilon$.

We denote by $C_0(E_\varepsilon)$ the following Banach space of functions on $E_\varepsilon$. We say that $f(x, k_\varepsilon(x))$ belongs to $C_0(E_\varepsilon)$, if
\begin{equation}\label{kx-bis}
f(x, k_\varepsilon(x)) \ = \ \left\{
\begin{array}{l}
f (x,\star) = F_1(x), \; \; \mbox{ if } \;  x \in \varepsilon Y^{\sharp}, \\
f(x, \big\{\frac{x}{\varepsilon} \big\}) = F_2(x, \eta)|_{\eta = \{\frac{x}{\varepsilon}\}}, \;  \mbox{ if } \;  x \in \varepsilon \overline{G}^{\sharp}
\end{array}
\right.
\end{equation}
for some $F_1 \in C_0(\mathbb{R}^d), \ F_2 \in C_0(\mathbb{R}^d, \, C(\overline{G}))$.
Thus the space $C_0(E_\varepsilon)$ contains functions vanishing as $|x| \to \infty$ that are continuous separately on $ \varepsilon Y^{\sharp}$ and on any element of $\varepsilon \overline{G}^{\sharp}$. The norm in $C_0(E_\varepsilon)$ is defined as
\begin{equation}\label{normle}
\|f\|_{C_0(E_\varepsilon)} = 
 \sup\limits_{x \in \mathbb R^d}|f(x,k_\varepsilon(x))|.
\end{equation}



 Define the generator $\hat L_\varepsilon$ of the two-component random jump process $\hat X_\varepsilon(t) = (X_\varepsilon (t), k_\varepsilon(X_\varepsilon (t)))$ on $E_\varepsilon$ as follows
\begin{equation}\label{hatL-eps}
(\hat L_\varepsilon f)(x, k_\varepsilon(x)) = \frac{1}{\varepsilon^{d+2}} \int_{\mathbb R^d } p_\varepsilon (x,y) \big( f(y , k_\eps (y)) - f(x , k_\eps (x))\big) dy, \quad f \in C_0(E_\varepsilon)
\end{equation}
with the same transition rates  $p_\varepsilon (x,y)= a_\varepsilon(x-y) \Lambda_\varepsilon(x, y)$
as in (\ref{Leps}) for the operator $A_\eps$.
The operator $\hat L_\varepsilon$ is dissipative in $C_0(E_\varepsilon)$, i.e.
$$
\| \lambda f - \hat L_\varepsilon f \|_{C_0(E_\varepsilon)} \ge \lambda \| f \|_{C_0(E_\varepsilon)}, \quad \lambda >0.
$$
Then we conclude that $\hat L_\varepsilon$ is the generator of a strongly continuous contraction semigroup $T_\varepsilon(t)$ on  $C_0(E_\varepsilon)$:
$$
\| T_\varepsilon(t) f \|_{C_0(E_\varepsilon)} = \sup_{x \in \mathbb{R}^d } |T_\varepsilon (t) f (x, k_\eps (x))| \le \sup_{x \in \mathbb{R}^d } |f (x, k_\eps (x))|,  \quad f \in C_0(E_\varepsilon).
$$
\begin{remark}\label{RT}
Since the point  $(x,k_\eps (x)) \in E_\varepsilon$ is uniquely defined by its first coordinate  $x \in \mathbb R^d$, then we can use  $x \in \mathbb R^d$ as a coordinate in $E_\varepsilon$ (considering $E_\varepsilon$ as a graph of the mapping $k_\eps: \mathbb R^d \to G^\star $). In particular, this implies that the transition rates of the random jump on $E_\varepsilon$ coincide with the analogous ones on  $\mathbb R^d$ given in \eqref{Leps}.
\end{remark}
\bigskip

We proceed to constructing the limit semigroup.  We let $E= \mathbb R^d \times G^\star$, and $ C_0 (E)$ stands for the Banach space of continuous functions vanishing at infinity. Any function $F=F(x,k) \in C_0(E)$ can be represented as a vector function
\begin{equation}\label{Fxk}
F(x,k) = \left(
\begin{array}{l}
f_0(x) \\
g(x, \xi)
\end{array}
\right)
\end{equation}
with $f_0(x) \in C_0(\mathbb R^d), \; g(x,\xi) \in C_0(\mathbb R^d, \, C(\overline{G}))$.
The first component $f_0(x)$ of $F(x,k)$ corresponds to $k=\star$ while  $F(x,k) = g(x, \xi)$ if $k=\xi \in \overline{G}$.
In what follows we use the notation $\xi$ for the second coordinate of $g(x, k)$ when $k \in \overline{G}$ to stress that
$\xi$ is a coordinate on the period $\mathbb{T}^d$.

The norm in $C_0(E)$ is given by
$$
\|F\|_{C_0(E)} = \max \big \{ \max_{x\in \mathbb R^d} |f_0(x)|, \; \max_{x \in \mathbb R^d; \ \xi \in \overline{G}}  |g(x, \xi)| \big\}.
$$

Consider the operator
\begin{equation}\label{L-limit}
L F(x,k) =
\Theta \cdot \nabla \nabla f_0 (x) {\bf 1}_{\{k=\star\}}
 +  L_G F(x,k),
\end{equation}
where ${\bf 1}_{\{k=\star\}}$ is the indicator function,  $\Theta$ is  a positive definite matrix that will be defined below,
$\Theta \cdot \nabla \nabla f_0=\mathrm{Tr}(\Theta \nabla \nabla f_0 )$,
and  $L_G$ has the following form
\begin{equation}\label{LG}
L_G \left(
\begin{array}{l}
f_0 \\
g
\end{array}
\right) (x, \xi)  =
\left(
\begin{array}{l}
\int\limits_G \tilde b(\xi)(g(x, \xi) - f_0(x)) d\xi \\[3mm]
\int\limits_G \tilde a(\xi - \xi') v(\xi,\xi')(g(x, \xi') - g(x, \xi)) d\xi' + \tilde c(\xi)(f_0(x) - g(x, \xi))
\end{array}
\right)
\end{equation}
with
\begin{equation}\label{tilde-adef}
\tilde a(\xi) = \sum_{n \in \mathbb{Z}^d} a(\xi + n), \;\; \xi \in \mathbb{T}^d,
\end{equation}
and
\begin{equation}\label{b-c}
\begin{array}{c}
\displaystyle
\tilde b(\xi) \ = \ \frac{1}{|Y|} \ \int\limits_{Y} \tilde a(\eta - \xi) v(\eta,\xi) d \eta, \; \; \xi \in \overline{G}\\[1mm]
\displaystyle
\tilde c(\xi) \ = \ \int\limits_{Y} \tilde a(\xi -\eta) v(\xi,\eta) d \eta, \; \; \xi \in \overline{G};
\end{array}
\end{equation}
here $v(\eta,\xi)$ is the function defined above in \eqref{v-eps} - \eqref{vv}, and estimate \eqref{vv} implies that
\begin{equation}\label{b-c-est}
0 \le \tilde b(\xi)  \le b_2, \quad 0  \le \tilde c(\xi)  \le c_2,  \quad \mbox{ for all } \; \xi \in \overline{G},
\end{equation}
with some constants $b_2, \, c_2$.

Notice that $L_G$ is a bounded linear operator in $C_0(E)$ and it is a generator of a Markov jump process on $G^\star$.

\begin{lemma}\label{tilde}
Functions $\tilde b(\xi)$ and $\tilde c(\xi)$ are continuous: $\tilde b, \, \tilde c \in C(\overline{G})$.
\end{lemma}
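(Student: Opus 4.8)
The plan is to reduce $\tilde b$ and $\tilde c$ to integrals of the \emph{continuous} kernel $w$ over a fixed integration variable, and then to conclude by dominated convergence. First I would note that by \eqref{defGY} the set $Y$ is open and disjoint from $G$, hence $\overline G\cap Y=\emptyset$ in $\mathbb T^d$, so ${\bf 1}_{Y^\sharp}(\xi)=0$ for every $\xi\in\overline G$. By \eqref{v-eps} this gives $v(\eta,\xi)=w(\eta,\xi)$ and $v(\xi,\eta)=w(\xi,\eta)$ whenever $\xi\in\overline G$. Expanding $\tilde a$ through \eqref{tilde-adef} in \eqref{b-c} and interchanging the sum with the integral (legitimate by Tonelli, since $a\ge0$ and $w\ge\alpha_1>0$) yields
$$
\tilde b(\xi)=\frac1{|Y|}\sum_{n\in\mathbb Z^d}\int_Y a(\eta-\xi+n)\,w(\eta,\xi)\,d\eta,\qquad \tilde c(\xi)=\sum_{n\in\mathbb Z^d}\int_Y a(\xi-\eta+n)\,w(\xi,\eta)\,d\eta .
$$
Then I would perform the change of variables $z=\eta-\xi+n$ in the first expression and $z=\xi-\eta+n$ in the second, and use $\bigcup_{n}(Y+n)=Y^\sharp$ to collapse the sums, obtaining
$$
\tilde b(\xi)=\frac1{|Y|}\int_{\mathbb R^d}{\bf 1}_{Y^\sharp}(z+\xi)\,a(z)\,w^\sharp(z+\xi,\xi)\,dz,\qquad \tilde c(\xi)=\int_{\mathbb R^d}{\bf 1}_{Y^\sharp}(\xi-z)\,a(z)\,w^\sharp(\xi,\xi-z)\,dz ,
$$
where $w^\sharp$ denotes the periodic extension of $w$ in the translated argument, which is continuous on $\mathbb R^d\times\mathbb T^d$ by \eqref{w}. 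The purpose of this rearrangement is that the kernel $a$, which by \eqref{a-1} is only $L^1\cap L^\infty$ and in general not continuous, now sits on a fixed argument $z$, while the whole $\xi$-dependence is carried by the indicator and by the continuous factor $w^\sharp$.

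Next I would apply dominated convergence along an arbitrary sequence $\xi_j\to\xi_0$ in $\overline G$. The integrands are dominated by $\alpha_2\,a(z)\in L^1(\mathbb R^d)$ (by \eqref{vv} and \eqref{a-1}); the factor $w^\sharp(z+\xi_j,\xi_j)\to w^\sharp(z+\xi_0,\xi_0)$ by continuity of $w^\sharp$; and ${\bf 1}_{Y^\sharp}(z+\xi_j)\to{\bf 1}_{Y^\sharp}(z+\xi_0)$ for every $z$ with $z+\xi_0\notin\partial Y^\sharp$, since $Y^\sharp$ and the interior of its complement are open. Because $Y$ has a Lipschitz boundary, so does $Y^\sharp$, hence $|\partial Y^\sharp|=0$, and by translation invariance of Lebesgue measure the exceptional set $\partial Y^\sharp-\xi_0$ is null; thus the integrand converges for a.e. $z$, and dominated convergence gives $\tilde b(\xi_j)\to\tilde b(\xi_0)$, and likewise $\tilde c(\xi_j)\to\tilde c(\xi_0)$. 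Since $\overline G$ is a metric space, sequential continuity is continuity, so $\tilde b,\tilde c\in C(\overline G)$.

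The main obstacle is exactly the interaction of the non-continuity of $a$ with the jump discontinuity of ${\bf 1}_{Y^\sharp}$: one cannot pass to the limit directly in $\tilde a(\eta-\xi)$ as $\xi$ moves, so the change of variables that fixes the argument of $a$ is essential, and afterwards the indicator's jump must be confined to a Lebesgue-null set, which is precisely where the Lipschitz regularity of $\partial Y$ is used. The remaining ingredients — the Tonelli interchange, the change of variables, and the $L^1$ domination — are routine.
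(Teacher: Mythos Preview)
Your argument is correct, and it takes a genuinely different route from the paper's. The paper works directly with the increment $|\tilde c(\xi+\delta)-\tilde c(\xi)|$: after the translation $\eta\mapsto\eta-\delta$ it splits the integral over $Y\cap(Y-\delta)$ and the symmetric difference $W_\delta=Y\triangle(Y-\delta)$, controls the first piece by the uniform continuity of $w$ on $\mathbb T^d\times\mathbb T^d$, and controls the boundary pieces by $\|w\|_\infty\int_{W_\delta}\tilde a(\xi-\eta)\,d\eta\to0$, using only $\tilde a\in L^1(\mathbb T^d)$ and $|W_\delta|\to0$ (the latter holds for any bounded measurable $Y$ by $L^1$-continuity of translation, so no regularity of $\partial Y$ is invoked). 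Your approach instead first changes variables so that the possibly discontinuous kernel $a$ sits on a fixed argument $z$, and then runs dominated convergence against $\alpha_2\,a\in L^1(\mathbb R^d)$; the price is that the indicator ${\bf 1}_{Y^\sharp}(z+\xi)$ now carries the $\xi$-dependence, and to handle its pointwise limit you need $|\partial Y^\sharp|=0$, which is exactly where you spend the Lipschitz assumption. Both arguments hinge on the continuity of $w$ from \eqref{w}; yours is a little cleaner conceptually, while the paper's increment bound is slightly more quantitative and avoids appealing to the boundary regularity of $Y$.
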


\begin{proof}
The proof follows from condition \eqref{w} on $w(\xi, \eta)$. It is clear that both functions can be studied in the same way. For an arbitrary $\delta\in\mathbb R^d$  let us consider the increments
$
\triangle_\delta = |\tilde c(\xi + \delta)-\tilde c(\xi)|
$ of the function $\tilde c$.
After a simple rearrangement we get
\begin{equation}\label{lemma-tilde}
\begin{array}{c}
\displaystyle
\triangle_\delta =   \Big| \int\limits_{Y} \tilde a(\xi - \eta) w(\xi, \eta) d \eta - \int\limits_{Y-\delta} \tilde a(\xi -\eta) w(\xi+\delta,\eta+\delta) d \eta \Big|
\\[2mm]
\displaystyle
= \Big| \int\limits_{Y\cap (Y-\delta)} \tilde a(\xi - \eta) \big( w(\xi, \eta) - w(\xi+\delta, \eta+\delta) \big) d \eta
\\[2mm] \displaystyle
+ \int\limits_{Y\setminus (Y-\delta)} \tilde a(\xi -\eta) w(\xi,\eta) d \eta -
\int\limits_{(Y-\delta)\setminus Y} \tilde a(\xi -\eta) w(\xi+\delta,\eta+\delta) d \eta \Big|
\\[2mm]
\displaystyle
\le \varphi(\delta) \, \|\tilde a\|_{L^1 (\mathbb{T}^d)} + \|w\|_{C(\mathbb T^d)}\int\limits_{W_\delta} \tilde a(\xi -\eta)\, d \eta,
\end{array}
\end{equation}
where $\varphi(\delta)=\max\limits{\xi,\eta\in\mathbb T^d}|w(\xi+\delta,\eta+\delta)-w(\xi,\eta)|$, and
$W_\delta = Y \triangle (Y-\delta)$ is the symmetric difference of $Y$ and $Y\setminus\delta$.
Due to \eqref{w} we have $\varphi(\delta) \to 0$ as $\delta \to 0$.
Since $ \tilde a(\xi -\eta)  \in L^1(\mathbb{T}^d)$, 
and $\mbox{meas } W_\delta \to 0$ as $\delta \to 0$, we conclude that both terms on the right-hand side of \eqref{lemma-tilde} tend to 0 as $\delta \to 0$.
\end{proof}

Let $D(L)\subset C_0(E)$ be the domain of the operator $L$.

\begin{lemma}\label{HI}
The closure of the operator $(L, D(L))$ is the generator of a strongly continuous 
contraction semigroup on $C_0(E)$.
\end{lemma}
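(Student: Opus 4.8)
The plan is to apply the Hille--Yosida--Ray characterization of Feller generators (see \cite[Theorem~4.2.2]{EK}): if $L$ is densely defined on $C_0(E)$, satisfies the positive maximum principle, and $(\lambda I-L)D(L)$ is dense in $C_0(E)$ for some $\lambda>0$, then $\overline L$ is single-valued and generates a strongly continuous positive contraction semigroup. The key observation is the splitting $L=L_0+L_G$, where $L_G$ is the bounded operator \eqref{LG} (already known to be a Markov generator on $G^\star$, and all of whose coefficients are nonnegative by \eqref{b-c-est} together with $\tilde a\ge0$, $v\ge0$), and $L_0 F=(\Theta\cdot\nabla\nabla f_0)\,{\bf 1}_{\{k=\star\}}$ acts as the constant-coefficient second order operator $\Theta\cdot\nabla\nabla$ on the sheet $k=\star$ and as $0$ on the sheet $k\in\overline G$. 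Since $G^\star$ is compact with $\star$ an isolated point, one has $C_0(E)=C_0(\mathbb R^d)\times C_0(\mathbb R^d, C(\overline G))$, and because $\Theta$ is positive definite the closure $\overline{L_0}$ generates the strongly continuous contraction semigroup $S_0(t)(f_0,g)=(P_tf_0,\,g)$, where $P_t$ is the Gaussian semigroup on $C_0(\mathbb R^d)$ with covariance $2t\Theta$; its domain is $D(\overline{L_0})=\{(f_0,g):\Theta\cdot\nabla\nabla f_0\in C_0(\mathbb R^d)\}$ (with the Hessian understood distributionally), and $C_c^\infty(\mathbb R^d)$ is a core for it.

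Next I would check that $D(L)$ is dense in $C_0(E)$ (it contains all $(f_0,g)$ with $f_0\in C_c^\infty(\mathbb R^d)$ and $g(x,\xi)=\sum_j\phi_j(x)\psi_j(\xi)$, $\phi_j\in C_c^\infty(\mathbb R^d)$, $\psi_j\in C(\overline G)$) and that it is a core for $\overline{L_0}$. Then comes the positive maximum principle for $L$: let $F=(f_0,g)\in D(L)$ with $\sup_E F=F(x_0,k_0)\ge0$. If $k_0=\star$, then $x_0$ is an interior maximum point of $f_0$, so $\nabla\nabla f_0(x_0)\le0$ and hence $\Theta\cdot\nabla\nabla f_0(x_0)=\mathrm{Tr}(\Theta\,\nabla\nabla f_0(x_0))\le0$ because $\Theta$ is positive definite; moreover $g(x_0,\xi)\le f_0(x_0)$ for all $\xi$, so the first line of \eqref{LG}, namely $\int_G\tilde b(\xi)(g(x_0,\xi)-f_0(x_0))\,d\xi$, is nonpositive. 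If $k_0=\xi_0\in\overline G$, the indicator term vanishes, and since $g(x_0,\xi')\le g(x_0,\xi_0)$ and $f_0(x_0)\le g(x_0,\xi_0)$, the second line of \eqref{LG} at $(x_0,\xi_0)$ is a sum of two nonpositive terms. In either case $LF(x_0,k_0)\le0$; in particular $L$ is dissipative, hence closable, on $C_0(E)$.

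Finally I would establish the range condition by a bounded perturbation argument. For $\lambda>0$ the operator $\lambda I-\overline{L_0}$ maps $D(\overline{L_0})$ onto $C_0(E)$ with $\|(\lambda I-\overline{L_0})^{-1}\|\le\lambda^{-1}$, and since $D(L)$ is a core for $\overline{L_0}$ the subspace $(\lambda I-\overline{L_0})D(L)$ is dense in $C_0(E)$. Taking $\lambda>\|L_G\|$, the operator $I-L_G(\lambda I-\overline{L_0})^{-1}$ is boundedly invertible by a Neumann series, and on $D(L)$ one has $\lambda I-L=\bigl(I-L_G(\lambda I-\overline{L_0})^{-1}\bigr)(\lambda I-\overline{L_0})$, so $(\lambda I-L)D(L)$ is the image of a dense subspace under a bounded invertible operator and is therefore dense. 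The cited theorem then yields that $\overline L$ generates a strongly continuous (positive) contraction semigroup; moreover, since $L_G$ is bounded, $\overline L=\overline{L_0}+L_G$, so the limit semigroup is precisely the bounded perturbation of $S_0(t)$ by $L_G$.

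The step requiring the most care is not any single estimate but rather checking that the genuinely degenerate first term of \eqref{L-limit} — a second order operator living only on the one-point sheet $k=\star$ and trivial on the continuum sheet $k\in\overline G$ — fits into the $C_0(E)$-generator machinery; this is exactly the role of the observation that $\star$ is isolated in $G^\star$, so that $C_0(E)$ splits as a product and $\overline{L_0}$ is an honest generator on it. Once that is in place, the positive maximum principle and the Neumann-series range argument are routine. (If one takes $D(L)$ already equal to the natural maximal domain $D(\overline{L_0})$, then $L$ is already closed and the word ``closure'' in the statement is vacuous.)
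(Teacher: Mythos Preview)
Your proof is correct and follows the same overall Hille--Yosida strategy as the paper (density, positive maximum principle, range condition), but you establish the range condition by a genuinely different mechanism. The paper writes out the system $(\lambda-L)F=H$ explicitly, solves the $g$-equation first to express $g^\lambda = f_0\,\varphi^\lambda + g_2^\lambda$ with $\|\varphi^\lambda\|_{C(\overline G)}\to 0$ as $\lambda\to\infty$, substitutes this into the $f_0$-equation, and then inverts the resulting scalar operator $\lambda-\Theta\cdot\nabla\nabla-\int_G\tilde b(\xi)(1-\varphi^\lambda(\xi))\,d\xi$ for large $\lambda$; in effect a Schur-complement elimination of the $\overline G$-component. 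You instead split $L=L_0+L_G$, note that $\overline{L_0}$ already generates a Feller semigroup on the product $C_0(\mathbb R^d)\times C_0(\mathbb R^d,C(\overline G))$ (using crucially that $\star$ is isolated in $G^\star$), and obtain the range density for $\lambda>\|L_G\|$ via the factorization $\lambda-L=\bigl(I-L_G(\lambda-\overline{L_0})^{-1}\bigr)(\lambda-\overline{L_0})$ and a Neumann series. Your route is cleaner and more portable---it would work verbatim for any bounded Markovian coupling $L_G$---while the paper's explicit resolvent computation has the side benefit of exhibiting the solution structure $g=f_0\varphi^\lambda+g_2^\lambda$ in the same spirit as the later memory-kernel analysis in Section~\ref{sec4}. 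The check of the positive maximum principle is identical in both.
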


\begin{proof}
We check now that all conditions of the Hille-Yosida theorem are fulfilled for the operator \eqref{L-limit}.
The set of functions
\begin{equation}\label{core}
D = \{ h_0(x) \in S(\mathbb R^d), \;  h_1(x, \xi) \in S (\mathbb R^d, \, C(\overline{G})) \},
\end{equation}
is dense in $ C_0 (E)$ and $D \subset D(L)$, consequently $ D(L)$ is dense in  $ C_0 (E)$.

It is easy to see that the operator $L$ satisfies the positive maximum principle, i.e. if
$
F \in D(L)$  and  $\max_{ E } F(x,k) =  F(x_0, k_0),
$
then  $L F (x_0, k_0) \le 0$. Indeed, from (\ref{L-limit}) - (\ref{LG}) we obtain
$$
L F(x_0, \star ) = \Theta \cdot \nabla \nabla f_0 (x_0) + L_G F(x_0, \star) \le 0  \;   \mbox{ if } \; \max_{ E } F(x,k) = f_0(x_0) \; \big(\mbox{i.e. } (x_0, k_0) = (x_0, \star) \big),
$$
or
$$
L F(x_0, k) = L_G F(x_0, k) \le 0 \;    \mbox{ if } \; \max_{ E } F(x,k) = g(x_0, \xi_0) \; \big( \mbox{i.e. } (x_0, k_0) = (x_0, \xi_0), \ \xi_0 \in G \big).
$$
Thus, $L$ is dissipative.

Let us prove that  ${\cal R} (\lambda - L)$ is dense in $C_0 (E)$ for some $\lambda>0$. Rewrite the equation $(\lambda- L) F = H$ with $H = (h_0, h_1) \in D$ as follows:
\begin{equation}\label{LFH}
\begin{array}{c}
\displaystyle
\lambda f_0 (x) -\Theta \cdot \nabla \nabla f_0 (x)  - \int\limits_G \tilde b(\xi)(g(x, \xi) - f_0(x)) d\xi = h_0(x), \\[3mm]
\displaystyle
\lambda g(x, \xi) -\int\limits_G \tilde a(\xi - \xi') v(\xi,\xi')(g(x, \xi') - g(x, \xi)) d\xi' - \tilde c(\xi)(f_0(x) - g(x, \xi)) = h_1(x,\xi).
\end{array}
\end{equation}

The second equation in \eqref{LFH} can be rewritten as
\begin{equation}\label{LFH-1}
  -\int\limits_G \tilde a(\xi - \xi') v(\xi,\xi')(g(x, \xi') - g(x, \xi)) d\xi' + \big(\tilde c(\xi) + \lambda \big) g(x, \xi) = \tilde c(\xi)f_0(x) + h_1(x,\xi).
\end{equation}
Clearly, the operator on the left-hand side of  \eqref{LFH-1} is invertible for sufficiently large $\lambda>0$. Since equation \eqref{LFH-1} is linear,  its solution is the sum  $ g^\lambda(x, \xi) =  g^\lambda_1(x, \xi) + g^\lambda_2 (x, \xi)$ of solutions to two equations with the right-hand sides equal to $\tilde c(\xi) f_0(x)$ and $h_1(x,\xi)$ respectively. Then these solutions
$$
g^\lambda_1(x,\xi) = f_0(x) \, \varphi^\lambda(\xi) \in S (\mathbb R^d, \, C(\overline{G})) \; \mbox{ and } \; g^\lambda_2 (x, \xi) \in S (\mathbb R^d, \, C(\overline{G}))
$$ 
are bounded for sufficiently large $\lambda>0$.
Function $\varphi^\lambda(\xi)$ is a solution of equation
$$
 -\int\limits_G \tilde a(\xi - \xi') v(\xi,\xi')(\varphi^\lambda(\xi') - \varphi^\lambda(\xi)) d\xi' + \big(\tilde c(\xi) + \lambda \big) \varphi^\lambda(\xi) = \tilde c(\xi),
$$
and moreover  $\|\varphi^\lambda\|_{C(\overline{G})} \to 0$ as $\lambda \to \infty$.
Substituting $ g^\lambda(x, \xi) =  f_0(x) \, \varphi^\lambda(\xi) + g^\lambda_2 (x, \xi)$ into the first equation in \eqref{LFH} we get
\begin{equation}\label{f0-lambda}
\Big(\lambda - \Theta \cdot \nabla \nabla  -  \int\limits_G \tilde b(\xi) (1- \varphi^\lambda(\xi)) d\xi \Big) f_0(x)   = h_0(x) + \int\limits_G \tilde b(\xi) g^\lambda_2(x, \xi) d\xi.
\end{equation}
Since
the operator on the left-hand side of \eqref{f0-lambda} is invertible for sufficiently large $\lambda$, equation \eqref{f0-lambda} has a solution
 $f_0(x) \in S(\mathbb R^d)$.

Thus, applying the Hille-Yosida theorem we conclude that the closure of $L$ is a generator of a strongly continuous contraction semigroup $T(t)$  on $ C_0 (E)$, that is a Feller semigroup.
\end{proof}

Notice that $D$ is dense in $ C_0 (E)$ and  ${\cal R} \big( \lambda - L)|_D \big)$ is dense in $C_0(E)$
for some $\lambda>0$. Therefore,
the subspace $D \subset \ D(L)$ introduced in \eqref{core} is a core for $L$.

\medskip
For any $F \in C_0(E)$ we  define 
\begin{equation}\label{1}
(\pi_\varepsilon  F) (x, k_\varepsilon(x)) \ = \ \left\{
\begin{array}{ll}
f_0 (x), & \mbox{if} \; x \in \varepsilon Y^{\sharp}, \; (\mbox{or } k_\varepsilon(x) =\star); \\
g(x,  \big\{\frac{x}{\varepsilon} \big\}), & \mbox{if} \; x \in \varepsilon \overline{G}^{\sharp}, \; (\mbox{or } k_\varepsilon(x)= \big\{\frac{x}{\varepsilon} \big\} \in \overline{G}).
\end{array}
\right.
\end{equation}
Then $\pi_\varepsilon F \in C_0(E_\eps)$,  and $\pi_\varepsilon$ is a bounded linear transformation $ \pi_\varepsilon: C_0(E) \to C_0(E_\varepsilon)$:
\begin{equation}\label{hatpi1}
\| \pi_\varepsilon F \|_{C_0(E_\varepsilon)} = \sup_{ (x, k_\varepsilon(x)) \in E_\varepsilon} |(\pi_\varepsilon F) (x, k_\varepsilon(x))| \le \|  F \|_{C_0(E)}, \quad \; \sup_{\varepsilon} \|  \pi_\varepsilon \| \le 1.
\end{equation}

Our next step is to prove the convergence of the corresponding semigroup.


\begin{theorem}\label{T1}
Let $T(t)$ be the strongly continuous, positive, contraction semigroup on $ C_0 (E)$ with generator $L$ defined by \eqref{L-limit}--\eqref{LG}, and for each $\varepsilon>0$, $T_\varepsilon(t)$ be the strongly continuous, positive, contraction semigroup on $C_0(E_\varepsilon)$ defined above by its generators  \eqref{hatL-eps}.

Then for every $F \in C_0(E)  $
\begin{equation}\label{M-astral}
T_\varepsilon (t) \pi_\varepsilon F \ \to \  T(t) F   \quad \mbox{for all} \quad t \ge 0
\end{equation}
as $\varepsilon \to 0$.
\end{theorem}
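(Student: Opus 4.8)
The plan is to derive \eqref{M-astral} from the Trotter--Kato approximation theorem for semigroups acting on a varying sequence of Banach spaces (see \cite{EK}, Ch.~1). Its hypotheses are all at hand: $\sup_\varepsilon\|\pi_\varepsilon\|\le1$ by \eqref{hatpi1}, the operator $L$ generates the Feller semigroup $T(t)$ by Lemma \ref{HI}, each bounded operator $\hat L_\varepsilon$ generates the contraction semigroup $T_\varepsilon(t)$, and the set $D$ from \eqref{core} is a core for $L$. Hence it suffices to produce, for every $F=(f_0,g)\in D$, a family $F_\varepsilon\in C_0(E_\varepsilon)=D(\hat L_\varepsilon)$ with
\[
\|F_\varepsilon-\pi_\varepsilon F\|_{C_0(E_\varepsilon)}\to0,\qquad \|\hat L_\varepsilon F_\varepsilon-\pi_\varepsilon LF\|_{C_0(E_\varepsilon)}\to0\qquad(\varepsilon\to0);
\]
the theorem then yields \eqref{M-astral}, uniformly on bounded intervals of $t$.

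I would take $F_\varepsilon=\pi_\varepsilon F$ on the slow phase $x\in\varepsilon\overline G^{\sharp}$ and, on the fast phase $x\in\varepsilon Y^{\sharp}$, set
\[
F_\varepsilon(x,\star)=f_0(x)+\varepsilon\,\chi(x/\varepsilon)\!\cdot\!\nabla f_0(x)+\varepsilon^2 N(x/\varepsilon;x),
\]
where $\chi=(\chi_1,\dots,\chi_d)$ and $N$ are $\mathbb Z^d$-periodic in the fast variable and solve auxiliary equations on $Y$ governed by the bounded jump generator $\mathcal L_Y\phi(\eta)=\int_Y\tilde a(\eta-\zeta)(\phi(\zeta)-\phi(\eta))\,d\zeta$. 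Since $a$ (hence $\tilde a$) is symmetric, $\mathcal L_Y$ is self-adjoint with respect to Lebesgue measure on $Y$, and its kernel reduces to the constants by the $a$-connectedness of $Y^{\sharp}$ assumed in Section \ref{Sec1}; therefore $\mathcal L_Y\phi=h$ is solvable, with a solution inheriting the smoothness of $h$, exactly when $\int_Y h\,d\eta=0$. I would let $\chi_i$ solve $\mathcal L_Y\chi_i=\Psi_i$, where $\Psi(\eta)=\int_{\{z:\,\eta-z\in Y^{\sharp}\}}a(z)z\,dz$ is the coefficient of $\nabla f_0$ in the $\varepsilon^{-1}$ term obtained by Taylor-expanding $A^0_\varepsilon f_0$ on the fast phase; the solvability condition holds because $\int_Y\Psi(\eta)\,d\eta=\int_{\mathbb R^d}a(z)\,z\,\psi(z)\,dz$ with $\psi(z)=\big|\{\eta\in Y:\eta-z\in Y^{\sharp}\}\big|$ an even function, so the integrand is odd. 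I would let $N(\cdot;x)$ solve
\[
\mathcal L_Y N(\cdot;x)=-\big(\mathcal Q(\cdot)-\Theta\big)\!:\!\nabla\nabla f_0(x)-\big(\mathcal S(\cdot;x)-\overline{\mathcal S}(x)\big),
\]
where $\mathcal Q(\eta)$ collects the oscillating $O(1)$ coefficients of $\nabla\nabla f_0$ coming from $A^0_\varepsilon(f_0+\varepsilon\chi\!\cdot\!\nabla f_0)$, $\mathcal S(\eta;x)=\int_G\tilde a(\eta-\xi')v(\eta,\xi')(g(x,\xi')-f_0(x))\,d\xi'$ is the oscillating $O(1)$ part of $V_\varepsilon\pi_\varepsilon F$ on the fast phase, $\Theta:=|Y|^{-1}\int_Y\mathcal Q\,d\eta$, and $\overline{\mathcal S}(x):=|Y|^{-1}\int_Y\mathcal S(\eta;x)\,d\eta=\int_G\tilde b(\xi')(g(x,\xi')-f_0(x))\,d\xi'$, the last equality being precisely the definition \eqref{b-c} of $\tilde b$. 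Thus the right-hand side has zero $Y$-average, so $N$ exists, is continuous in $\eta$ and Schwartz in $x$; as all added terms are $O(\varepsilon)$ in norm, the first convergence above is immediate.

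For the second convergence I would expand $\hat L_\varepsilon F_\varepsilon=A^0_\varepsilon F_\varepsilon+V_\varepsilon F_\varepsilon$ uniformly in $x$. On the slow phase $A^0_\varepsilon$ drops out, since $\Lambda^0_\varepsilon(x,y)=0$ when $x\in\varepsilon G^{\sharp}$ by \eqref{Lambda0}, and $V_\varepsilon$ carries no singular prefactor; substituting $y=\varepsilon\eta$, replacing $g(\varepsilon\eta,\cdot)$ by $g(x,\cdot)$ at cost $O(\varepsilon)$ (using the finiteness of the first moment of $a$ and the smoothness of $g$), and periodizing the kernel (which turns $\sum_n a(\,\cdot+n)$ into $\tilde a$), one obtains the limit $\tilde c(\xi)(f_0(x)-g(x,\xi))+\int_G\tilde a(\xi-\xi')v(\xi,\xi')(g(x,\xi')-g(x,\xi))\,d\xi'=(\pi_\varepsilon LF)(x,\xi)$ by \eqref{LG}, the corrector contributions here being $O(\varepsilon)$. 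On the fast phase, the Taylor remainder in $A^0_\varepsilon f_0$ is $O(\varepsilon)$ thanks to the third-moment bound in \eqref{a-1} --- this is in essence Proposition \ref{P1}; the $\varepsilon^{-1}$ term $-\varepsilon^{-1}\Psi(\{x/\varepsilon\})\!\cdot\!\nabla f_0(x)$ is cancelled by the leading contribution $\varepsilon^{-1}(\mathcal L_Y\chi)(\{x/\varepsilon\})\!\cdot\!\nabla f_0(x)$ of $A^0_\varepsilon(\varepsilon\chi\!\cdot\!\nabla f_0)$ via the cell equation for $\chi$; and the remaining $O(1)$ terms combine --- using $A^0_\varepsilon(\varepsilon^2 N)\to(\mathcal L_YN)(\{x/\varepsilon\};x)$ and the choice of $N$ --- into $\Theta\!\cdot\!\nabla\nabla f_0(x)+L_GF(x,\star)=(\pi_\varepsilon LF)(x,\star)$, with an error $o(1)$ uniform in $x\in\varepsilon Y^{\sharp}$. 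This gives the second convergence, hence the theorem. (One technicality: $F_\varepsilon$ is continuous on each phase but not across $\partial(\varepsilon Y^{\sharp})$; to place it in $C_0(E_\varepsilon)$ one either modifies the correctors in an $O(\varepsilon)$-neighbourhood of the interface, a set of vanishing measure on which the modification alters $\hat L_\varepsilon F_\varepsilon$ by $o(1)$, or applies the approximation theorem on the space of bounded Borel functions, on which $\hat L_\varepsilon$ is still a bounded generator.)

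The main obstacle is the fast-phase homogenization, Proposition \ref{P1}: that $A^0_\varepsilon$ restricted to the perforated set $\varepsilon Y^{\sharp}$, after subtraction of the correctors, converges to $\Theta\!\cdot\!\nabla\nabla f_0$ with a remainder that is $o(1)$ \emph{uniformly in} $x$. In contrast with the convolution operator on all of $\mathbb R^d$ --- where the first moment of $a$ vanishes by symmetry and the second-moment term is immediately the effective one --- here the integration domain depends on $x$ and is only periodic, so a genuine $\varepsilon^{-1}$ singularity appears that must be removed by the cell problem for $\chi$; the effective matrix $\Theta$ is then assembled from a geometric second-moment term and a corrector term, and controlling the Taylor remainders uniformly is exactly what forces the third-moment assumption in \eqref{a-1}. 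The remaining technical points --- zero-average solvability of the cell problems (handled above via the symmetry of $a$ and the definition of $\tilde b$), regularity of $\chi$ and $N$ in both variables, positive definiteness of $\Theta$, and the interface continuity of $F_\varepsilon$ --- are comparatively routine.
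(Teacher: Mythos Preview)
Your proposal is correct and follows essentially the same route as the paper: reduction via the Ethier--Kurtz approximation theorem (Ch.~1, Thm.~6.1) to constructing an approximating family $F_\varepsilon$ with correctors on the fast phase, your $\chi$ being the paper's $\varphi$ and your single second-order corrector $N$ combining what the paper separates into $\varkappa$ (Proposition~\ref{P1}) and $K$ (Proposition~\ref{P2}). Your closing ``technicality'' is a non-issue: by the definition following \eqref{kx-bis}, functions in $C_0(E_\varepsilon)$ are only required to be continuous on each phase separately, so $F_\varepsilon$ as constructed already belongs to $C_0(E_\varepsilon)$ and no interface modification is needed.
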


\begin{proof}
In view of (\ref{normle}) to prove (\ref{M-astral}) it suffices to show that
\begin{equation}\label{R1}
\|  T_\varepsilon (t) \pi_\varepsilon F   - \pi_\varepsilon T(t) F  \|_{C_0(E_\varepsilon)} = \sup_{x \in \mathbb R^d}\left| T_\varepsilon (t) \pi_\varepsilon F (x, k_\varepsilon(x))  -
\pi_\varepsilon  T(t) F (x, k_\varepsilon(x)) \right| \to 0 \quad \mbox{as} \; \varepsilon \to 0.
\end{equation}
The proof of  (\ref{R1}) relies on the following  approximation theorem
\cite[Theorem 6.1, Ch.1]{EK}.\\[3mm]
{\bf Theorem}\,\cite{EK}.
{\sl \
For $n=1,2,\ldots$, let $\{T_n (t)\}$ and $\{T(t) \}$ be strongly continuous contraction semigroup on Banach spaces  ${\cal L}_n$ and ${\cal L}$ with generators $A_n$ and $A$. Let $D$ be a core for $A$. Then the following are equivalent: \\

a) For each $f \in {\cal L} $, $T_n (t) \pi_n f \ \to \  T(t) f \; $  as $\varepsilon \to 0$ for all  $t \ge 0$. \\

b) For each $f \in D$, there exists $f_n \in {\cal L}_n$ for each $n \ge 1$ such that $f_n \to f$ and $A_n f_n \to Af$.
}

\bigskip
We write $f_n \to f$ if $f_n \in {\cal L}_n$, $f\in {\cal L}$  and $\lim_{n \to \infty} \|f_n - \pi_n f \| =0$.
According to this theorem  the semigroups convergence stated in item a) is equivalent to
the statement in item b) which is the subject of the next lemma.

\begin{lemma}\label{Fn}
Let $L$ and $\hat L_\varepsilon$  be the operators defined by  \eqref{hatL-eps}, \eqref{L-limit}--\eqref{LG}, and the core $D \subset C_0(E)$ of $L$ is given by \eqref{core}. 
Then for every $F \in D$, there exists $F_\varepsilon \in  C_0(E_\varepsilon)$ such that
\begin{equation}\label{F0}
 \| F_\varepsilon - \pi_\varepsilon F\|_{ C_0(E_\varepsilon)} \to 0
\end{equation}
and
\begin{equation}\label{F1}
\| \hat L_\varepsilon F_\varepsilon - \pi_\varepsilon LF\|_{ C_0(E_\varepsilon )} \to 0 \quad \mbox{as } \; \varepsilon \to 0.
\end{equation}
\end{lemma}

\begin{proof}

For the proof, we are going to construct functions $F_\varepsilon \in C_0(E_\varepsilon)$ for which \eqref{F0} - \eqref{F1} hold.
Let $F = (f_0, g) \in D$, then we define $F_\varepsilon \in C_0(E_\varepsilon)$ as follows
\begin{equation}\label{4}
F_\varepsilon (x, k_\varepsilon(x)) \ = \ \left\{
\begin{array}{lr}
\displaystyle
f_0 (x) \; + \; \varepsilon \, \varphi ( \big \{\frac{x}{\varepsilon} \big\})\nabla f_0(x) \;
+ \;
\varepsilon^2 \varkappa ( \big \{\frac{x}{\varepsilon} \big\}) \nabla \nabla f_0 (x)
\\[4mm]
\displaystyle
+ \; \varepsilon^2  K \big( x, \big\{\frac{x}{\varepsilon} \big\} \big), 
&  \mbox{if} \;  x \in \varepsilon Y^{\sharp}
\\[4mm] \displaystyle
g(x, \big \{\frac{x}{\varepsilon} \big\}),  & \mbox{if} \;  x \in \varepsilon \overline{G}^{\sharp}
\end{array}
\right.
\end{equation}
Here  $\varphi(\xi) \in \big( C(Y) \big)^d, \; \varkappa(\xi) \in  \big( C(Y) \big)^{d^2}$ are 
continuous functions,  $ K(x, \eta) \in C_0(\mathbb{R}^d, C(Y))$ is a continuous function in both variables vanishing as $|x| \to \infty$. Functions  $\varphi(\xi)$ and $ \varkappa(\xi)$ will be constructed below in Proposition \ref{P1}, and see Proposition \ref{P2} for the construction of $ K(x,\eta)$.

From (\ref{1}) and (\ref{4}) it immediately follows that $F_\eps = \pi_\varepsilon F + G_\eps$ with $\|G_\eps \|_{ C_0(E_\varepsilon)} \le C \eps$. Consequently,
$$
\sup_{x \in \mathbb R^d} |  F_\varepsilon (x, k_\varepsilon(x)) - \pi_\varepsilon F(x, k_\varepsilon(x))  | = \| F_\varepsilon - \pi_\varepsilon F\|_{ C_0(E_\varepsilon)} \to 0
$$ as $\varepsilon \to 0$. Thus convergence (\ref{F0}) is proved.
\medskip

To justify the limit relation in (\ref{F1}) we will consider separately the cases when either $x \in \varepsilon Y^{\sharp}$ or  $x \in \varepsilon \overline{G}^{\sharp}$, and choosing the proper functions $q(\cdot), \ \varphi(\cdot), \ \varkappa(\cdot)$ in \eqref{4} we will prove that
\begin{equation}\label{conv1}
 \sup_{x \in \varepsilon Y^\sharp } |\hat L_\varepsilon F_\varepsilon (x, k_\varepsilon(x)) -
 \pi_\varepsilon LF (x, k_\varepsilon(x))|   \to 0
\end{equation}
and
\begin{equation}\label{conv2}
\sup_{x \in \varepsilon G^\sharp } |\hat L_\varepsilon F_\varepsilon (x, k_\varepsilon(x)) -
\pi_\varepsilon LF (x, k_\varepsilon(x))|   \to 0
\end{equation}
as $\varepsilon \to 0$.

Representations (\ref{Leps-sum}) - (\ref{Veps}) and \eqref{Lambda-eps} - \eqref{v-eps} yeild
\begin{equation}\label{hat-L-sum}
\hat L_\varepsilon = \hat L^0_\varepsilon + \hat V_\varepsilon,
\end{equation}
where
\begin{equation}\label{hat-L0eps}
\big( \hat L^0_\varepsilon f \big) (x, k_\varepsilon(x)) = \frac{1}{\varepsilon^{d+2}} \int_{\mathbb{R}^d} a(\frac{x-y}{\varepsilon}) \, \Lambda^0(\frac{x}{\varepsilon}, \frac{y}{\varepsilon}) \, \big( f(y ,k_\varepsilon(y)) -
f(x ,k_\varepsilon(x))\big) dy, \;\; f \in C_0(E_\eps),
\end{equation}
is the generator of the random jump process on the perforated domain $\varepsilon Y^{\sharp} = \mathbb R^d \setminus \varepsilon \overline{G}^{\sharp}$, and the operator
\begin{equation}\label{hat-Veps}
\big( \hat V_\varepsilon f \big) (x, k(x) ) = \frac{1}{\varepsilon^{d}} \int_{\mathbb{R}^d} a(\frac{x-y}{\varepsilon}) \, v(\frac{x}{\varepsilon}, \frac{y}{\varepsilon})\, \big( f(y ,k_\varepsilon(y)) - f(x ,k_\varepsilon(x))\big) dy, \;\;  f \in C_0(E_\eps),
\end{equation}
can be seen as a "perturbation" of $\hat L^0_\varepsilon$.

Since the second component in $E_\varepsilon$ is a function of the first one, in the remaining part of the proof we will write $F_\varepsilon(x)$ instead of $F_\varepsilon(x,k_\varepsilon(x))$ for the function $F_\eps$ given by \eqref{4}.

First we prove the convergence relation \eqref{conv1}. For $x \in \varepsilon Y^{\sharp}$ the first component of $F_\varepsilon$ can be written, see \eqref{4}, as a sum
\begin{equation}\label{5}
F_\varepsilon (x) = F_\varepsilon^P(x) + F_\varepsilon^Q(x), \quad  x \in \varepsilon Y^{\sharp},
\end{equation}
where
\begin{equation}\label{FP}
F_\varepsilon^P (x) = f_0 (x) + \varepsilon \varphi ( \big \{\frac{x}{\varepsilon} \big\})\nabla f_0(x)
+\eps^2\varkappa ( \big \{\frac{x}{\varepsilon} \big\})\nabla\nabla f_0(x)
\end{equation}
\begin{equation}\label{FQ}
 F_\varepsilon^Q (x) = \varepsilon^2  K \big( x, \big\{\frac{x}{\varepsilon} \big\} \big).
\end{equation}
Then
\begin{equation}\label{Ldec}
\hat L_\varepsilon F_\varepsilon = (\hat L_\varepsilon^0 + \hat V_\varepsilon)F_\varepsilon = \hat L_\varepsilon^0 (F_\varepsilon^P  +  F_\varepsilon^Q) + \hat V_\varepsilon F_\varepsilon = \hat L_\varepsilon^0 F_\varepsilon^P  +  \hat L_\varepsilon^0 F_\varepsilon^Q  + \hat V_\varepsilon F_\varepsilon.
\end{equation}

\begin{proposition}\label{P1}
There exist  functions $\varphi( \eta) \in (C(Y))^d$ and $\varkappa(\eta) \in (C({Y}))^{d^2}$ and a positive definite matrix $\Theta>0$, such that
\begin{equation}\label{PP1}
\hat L_\varepsilon^0 F_\varepsilon^P  \ \to \Theta \cdot \nabla \nabla f_0, \quad \; \mbox{i.e. } \ \sup_{x \in \varepsilon Y^{\sharp}}|\hat L_\varepsilon^0 F_\varepsilon^P (x) -  \Theta \cdot \nabla \nabla f_0(x)| \to 0  \quad \mbox{as } \; \varepsilon \to 0.
\end{equation}
\end{proposition}

The proof of this proposition is based on the corrector techniques, it is given in the Appendix.
\bigskip

The last Proposition allows us to pass to the limit in the first term on the right-hand side of \eqref{Ldec}. We now turn to
rearranging the other two terms.
Using \eqref{hat-L0eps}, \eqref{hat-Veps} together with \eqref{Lambda0} - \eqref{v-eps} and \eqref{FQ} we have for  $x \in \varepsilon Y^{\sharp}$
\begin{equation}\label{Y-1}
\begin{array}{l}
\displaystyle
(\hat L_\varepsilon^0 F_\varepsilon^Q + \hat V_\varepsilon F_\varepsilon)(x)  =  \frac{1}{\varepsilon^{d+2}} \int\limits_{\varepsilon Y^\sharp} a(\frac{x-y}{\varepsilon}) \, \Lambda^0(\frac{x}{\varepsilon}, \frac{y}{\varepsilon}) \, \big( F_\varepsilon^Q(y) - F_\varepsilon^Q(x) \big) dy
\\[5mm] \displaystyle
   + \frac{1}{\varepsilon^{d}} \int\limits_{\varepsilon G^\sharp} a(\frac{x-y}{\varepsilon}) \, v(\frac{x}{\varepsilon}, \frac{y}{\varepsilon})\, \big( g(y , \{ \frac{y}{\varepsilon} \}) - f_0(x) \big) dy
   + O(\eps)
\\[5mm] \displaystyle
=  \frac{1}{\varepsilon^{d}} \int\limits_{\varepsilon Y^\sharp} a(\frac{x-y}{\varepsilon}) \, \Lambda^0(\frac{x}{\varepsilon}, \frac{y}{\varepsilon}) \Big( K \big( y, \{\frac{y}{\varepsilon} \} \big) -
K \big( x, \{\frac{x}{\varepsilon} \} \big) \Big)  \, dy
 \\[5mm] \displaystyle
 \;  + \frac{1}{\varepsilon^{d}} \int\limits_{\varepsilon G^\sharp} a(\frac{x-y}{\varepsilon}) \, v(\frac{x}{\varepsilon}, \frac{y}{\varepsilon})\, \big( g(y , \{ \frac{y}{\varepsilon} \}) - f_0(x) \big) dy
 + O(\eps)
\end{array}
\end{equation}

Making change variables $z=\frac{x-y}{\varepsilon}$, using \eqref{Lambda0} - \eqref{v-eps} and taking into account the continuity and periodicity of $ K (x, \frac{x}{\varepsilon} )$ in the second variable we can rewrite \eqref{Y-1} for  $x \in \varepsilon Y^{\sharp}$ as follows:
\begin{equation}\label{Y-2bis}
\begin{array}{l}
\displaystyle
(\hat L_\varepsilon^0 F_\varepsilon^Q + \hat V_\varepsilon F_\varepsilon)(x) =
\\[5mm] \displaystyle
\int\limits_{\mathbb{R}^d} a(z) \, \Lambda^0(\frac{x}{\varepsilon}, \frac{x}{\varepsilon} - z) \big( K ( x - \varepsilon z, \frac{x}{\varepsilon} - z ) -  K (x, \frac{x}{\varepsilon} ) \big) d z
\\[5mm] \displaystyle
+ \int\limits_{\mathbb{R}^d} a(z) \, v(\frac{x}{\varepsilon}, \frac{x}{\varepsilon} - z) ( g(x- \varepsilon z, \frac{x}{\varepsilon} - z) -  f_0(x) ) \, dz + O(\varepsilon)
 \\[5mm] \displaystyle
= \int\limits_{\mathbb{R}^d} a(z) \, \Lambda^0(\frac{x}{\varepsilon}, \frac{x}{\varepsilon} - z) \big( K ( x - \varepsilon z, \frac{x}{\varepsilon} - z ) -  K (x, \frac{x}{\varepsilon} -z ) \big) d z
 \\[5mm] \displaystyle
 +\int\limits_{\mathbb{R}^d} a(z) \, \Lambda^0(\frac{x}{\varepsilon}, \frac{x}{\varepsilon} - z) \big( K ( x, \frac{x}{\varepsilon} - z ) -  K (x, \frac{x}{\varepsilon} ) \big) d z
   \\[5mm] \displaystyle
 + \int\limits_{\mathbb{R}^d} a(z) \, v(\frac{x}{\varepsilon}, \frac{x}{\varepsilon} -z )\, ( g(x - \varepsilon z, \frac{x}{\varepsilon} -z ) -  g(x, \frac{x}{\varepsilon} -z ) ) d z
  \\[5mm] \displaystyle
 + \int\limits_{\mathbb{R}^d} a(z) \, v(\frac{x}{\varepsilon}, \frac{x}{\varepsilon} -z )\,( g(x, \frac{x}{\varepsilon} -z ) -  f_0(x) ) d z + O(\eps).
\end{array}
\end{equation}
Since  $(f_0, g) \in D$ and $ K(x, \eta) \in C_0(\mathbb{R}^d, C(Y))$, then
\begin{equation}\label{y-x}
|K(x-\varepsilon z, \eta) -  K(x, \eta)| \le C(f,g) \varepsilon |z|, \quad | g(x-\varepsilon z, \xi) -  g(x, \xi) | \le C(g) \varepsilon |z| \quad \mbox{for all } \; \xi \in G,
\end{equation}
where $C(f,g),\ C(g)$ are constants depending only on functions $f$ and $g$ respectively.

We get using condition \eqref{a-1} on the kernel $a(z)$ 
that the first and the third integrals in \eqref{Y-2bis} are of order $\varepsilon$, and consequently
tend to 0 as $\varepsilon \to 0$.
Thus we obtain
\begin{equation}\label{Y-3}
(\hat L_\varepsilon^0 F_\varepsilon^Q + \hat V_\varepsilon F_\varepsilon)(x) =  - {\cal A}^0 K (x, \frac{x}{\varepsilon}) + \int\limits_G  \tilde a(\frac{x}{\varepsilon}- \xi) \, v(\frac{x}{\varepsilon}, \xi) ( g(x,\xi) - f_0(x) \big) d\xi   + O(\varepsilon),
\end{equation}
where  $\tilde a$ is given by \eqref{tilde-adef}
and
\begin{equation}\label{calA}
 {\cal A}^0 K (x, \eta)  = \int\limits_{Y} \tilde a(\eta-\eta') \, \Lambda^0(\eta, \eta') ( K (x, \eta' ) -  K (x,\eta ) ) d \eta', \quad \eta \in Y.
\end{equation}
Although we only consider the case $\Lambda^0(\eta, \eta') = 1$, see \eqref{Lambda0},  all further reasoning remains valid for any symmetric continuous function $\Lambda^0(\eta, \eta') = \Lambda^0(\eta', \eta)$ satisfying estimate \eqref{vv}.

Remind that $\tilde b(\xi)$ is defined in \eqref{b-c} as
\begin{equation*}\label{tilde-b}
\tilde b(\xi) =  \frac{1}{|Y|} \int\limits_Y \tilde a(\eta-\xi) \, v (\eta, \xi ) d \eta, \quad \xi \in G.
\end{equation*}

\begin{proposition}\label{P2}
There exists $K(x,\eta) \in  C_0(\mathbb R^d, \, C(Y) ), $ such that
\begin{equation}\label{7}
\sup_{x \in \varepsilon Y^{\sharp}} \left| (\hat L_\varepsilon^0 F_\varepsilon^Q + \hat V_\varepsilon F_\varepsilon)(x) - \int\limits_G \tilde b(\xi) \big( g(x,\xi) - f_0(x) \big) d\xi  \right| \ \to \ 0  \quad \mbox{as } \; \varepsilon \to 0,
\end{equation}
for all $f_0(x) \in C_0(\mathbb R^d), \; g(x,\xi) \in C_0(\mathbb R^d, \, C(\overline{G}))$.
\end{proposition}

\begin{proof}
According to \eqref{Y-3} relation  \eqref{7} holds, if the function  $ K(x,\eta)$ satisfies the equation
\begin{equation}\label{P2-0}
 - {\cal A}^0 K (x,\eta) + \int\limits_G  \tilde a(\eta - \xi) \, v(\eta, \xi) \big( g(x,\xi) - f_0(x) \big) d\xi = \int\limits_G \tilde b(\xi) \big( g(x,\xi) - f_0(x) \big) d\xi,
\end{equation}
or equivalently,
\begin{equation}\label{P2-3}
{\cal A}^0 K (x,\eta)  = \Psi(x, \eta)
\end{equation}
with
\begin{equation}\label{P2-2}
\Psi(x, \eta)= \int\limits_G  \tilde a(\eta - \xi) v(\eta, \xi) ( g(x,\xi) - f_0(x)) d\xi - \int\limits_G \tilde b(\xi) ( g(x,\xi) - f_0(x) ) d\xi.
\end{equation}
Since $\tilde a \in L^1(\mathbb{T}^d)$ and $v(\eta, \xi), \tilde b(\xi)$ are continuous functions, then $\Psi(x, \eta)$ is continuous in $\eta$. Moreover, the properties of $ g(x,\xi), f_0(x)$ imply that $\Psi(x, \eta) \in  C_0(\mathbb R^d, \, C(Y) )$ and in particular  $\Psi(x, \eta) \in  L^\infty (\mathbb R^d \times Y )$.
Thus, we need to prove that the solution of \eqref{P2-3} has the same properties as the function $\Psi(x,\eta)$ on the right side of the equation.

Due to \eqref{calA} we can rewrite equation \eqref{P2-3} as follows
\begin{equation}\label{P2-5}
\int\limits_{Y} \tilde a(\eta-\eta') K(x,\eta') d \eta' -  K(x,\eta) \, \int\limits_{Y} \tilde a(\eta-\eta') d \eta' = \Psi(x, \eta).
\end{equation}
Let us notice that conditions \eqref{a-1} - \eqref{a-1_low} on the function $a(\cdot)$ imply that
\begin{equation}\label{P2-4}
p(\eta) = \int\limits_{Y} \tilde a(\eta-\eta') d \eta' \ge c_0>0 \; \; \mbox{ for all } \; \eta \in Y \qquad \mbox{and } \quad p(\cdot) \in C(Y).
\end{equation}
Then using \eqref{b-c} and \eqref{P2-2} we conclude that the solvability condition
$\int\limits_Y \Psi(x, \eta) d \eta = 0$ holds, i. e. $\Psi(x, \eta)\, \bot \, {\bf 1}_Y$,  where
${\bf 1}_Y = {\rm Ker} \, \big( {\cal A}^0 \big)^\ast$. Consequently, there exists a solution $K(x, \cdot) \in L^2(Y)$ of  for every $x \in \mathbb{R}^d$.
Moreover, Theorem 5.4 from \cite{PSSZ} implies that $K(x, \eta) \in L^\infty(\mathbb{R}^d \times Y)$.
Since $K(x,\eta)$ is a bounded function, then using the same arguments as in Lemma \ref{tilde} we conclude that $\int\limits_{Y} \tilde a(\eta-\eta') K(x,\eta') d \eta'$ is continuous in $\eta$. Thus \eqref{P2-5} - \eqref{P2-4} yield that  $K(x,\eta) \in C_0(\mathbb{R}^d, C(Y))$.
\end{proof}

Thus taking into account \eqref{LG} and combining (\ref{PP1}) and (\ref{7}) we get \eqref{conv1}.
\medskip

To obtain \eqref{conv2} next we explore the case when $x \in \varepsilon G^{\sharp}$.
Due to \eqref{4}, \eqref{hat-Veps} and \eqref{Ldec} we can write
\begin{equation}\label{8}
\begin{array}{l}
\displaystyle
\hat L_\varepsilon F_\varepsilon(x) = (\hat L_\varepsilon^0 + \hat V_\varepsilon)F_\varepsilon(x) =  \hat V_\varepsilon F_\varepsilon(x)
=  \frac{1}{\varepsilon^d} \int\limits_{\varepsilon G^\sharp} a (\frac{x-y}{\varepsilon}) v (\frac{x}{\varepsilon}, \frac{y}{\varepsilon}) (g (y, \frac{y}{\varepsilon}) - g (x, \frac{x}{\varepsilon})) dy
\\[5mm] \displaystyle
+ \ \frac{1}{\varepsilon^d} \int\limits_{\varepsilon Y^\sharp}  a (\frac{x-y}{\varepsilon}) v (\frac{x}{\varepsilon}, \frac{y}{\varepsilon}) (f_0 (y) + O(\varepsilon) - g(x, \frac{x}{\varepsilon})) dy.
\end{array}
\end{equation}
In the same way as above using estimates \eqref{y-x} we have for $x \in \varepsilon G^\sharp$, $\xi = \big\{ \frac{x}{\varepsilon}\big\} \in G$
\begin{equation}\label{8bis}
\hat L_\varepsilon F_\varepsilon(x) = \hat V_\varepsilon F_\varepsilon(x) =
  \int\limits_{ G} \tilde a(\xi - \xi') \, v(\xi, \xi')\, \big( g(x , \xi') - g(x, \xi) \big) d\xi' + \tilde c(\xi) (f_0 (x) - g(x, \xi)) +  O(\varepsilon),
\end{equation}
where
\begin{equation}\label{tilde-c}
\tilde c(\xi) =  \int\limits_Y \tilde a(\xi - \eta) \, v (\xi, \eta ) d \eta, \quad \xi \in G.
\end{equation}

Finally, (\ref{conv2}) is a consequence of \eqref{LG} and (\ref{8bis}) - (\ref{tilde-c}). Lemma \ref{Fn} is proved.
\end{proof}

It remains to recall that (\ref{R1}) is a straightforward consequence of the above approximation theorem. This completes the proof of Theorem \ref{T1}.
\end{proof}


\section{Invariance principle, convergence of the processes}
\label{sec3}

In the previous section we justified the convergence of the semigroups, and consequently, the finite dimensional distributions of  $ \hat X_\varepsilon(t)$. In this section we construct the limit Markov process on $E$ with generator $L$ given by (\ref{L-limit}) - (\ref{LG}) using approximation theorems from \cite{EK}.

Let us describe first the process $\hat X_\varepsilon (t)$.
For the two-component random jump process  $\hat X_\varepsilon (t) = ( X_\varepsilon (t),  k(X_\varepsilon (t)) ) $ on $E_\varepsilon$ given by the generator (\ref{hatL-eps}), the second component $ k(X_\varepsilon (t)) \in G^\star$ is the function of the first component $ X_\varepsilon (t) \in  \mathbb R^d$ and the mapping $k_\varepsilon$. Thus Markov processes $\hat X_\varepsilon (t)$ and $X_\varepsilon (t)$ are equivalent, i.e. the trajectories of $\{ \hat X_\varepsilon(t) \}$ are isomorphic to trajectories of  $\{ X_\varepsilon(t) \}$. However, the second component of  $X_\varepsilon(t)$ plays the crucial role when passing to the limit $\varepsilon \to 0$. As has been shown in Section 2 the limit process $\mathcal X(t)$ preserves the Markov property only in the presence of the second component $k(t)$. It should be noted that in the process $\mathcal X(t)$ the second component is not a function of the first one anymore. This can be shown by analysing  the structure of the limit generator, see (\ref{L-limit}) - (\ref{LG}).

The goal of this section is to prove the existence of the limit process $\mathcal X(t)$ on $E$ corresponding to the Feller semigroup $T(t)$ with sample paths in $D_E [0, \infty)$ and to establish the invariance principle for the processes $\hat X_\varepsilon (t)$. Namely, we show that $\hat X_\varepsilon (t)$ converges in distributions, as $\varepsilon \to 0$, to $\mathcal X(t)$ in the Skorokhod topology of $D_E [0, \infty)$.

\begin{theorem}\label{T2}
For any initial distribution $\nu \in {\cal P}(E)$ there exists a Markov process $\mathcal X(t)$ corresponding to the Feller semigroup $T(t): C_0(E) \to C_0(E)$ with generator $L$ defined by \eqref{L-limit} -- \eqref{LG} and with sample paths in $D_E [0, \infty)$.

If $\nu$ is the limit law of $\hat X_\varepsilon(0)$ as $\varepsilon \to 0$, then
\begin{equation}\label{TT2}
\hat X_\varepsilon (t) \ \Rightarrow \ \mathcal X(t) \quad \mbox{ in } \; D_E [0, \infty) \; \mbox{ as } \; \varepsilon \to 0.
\end{equation}
\end{theorem}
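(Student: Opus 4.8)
The plan is to invoke the standard machinery of \cite{EK} relating Feller semigroups, Markov processes, and weak convergence in the Skorokhod space. The existence part is essentially immediate: by Lemma \ref{HI} the closure of $(L,D(L))$ generates a strongly continuous, positive, contraction semigroup $T(t)$ on $C_0(E)$, and since $E=\mathbb R^d\times G^\star$ is locally compact and separable, $T(t)$ is a Feller semigroup. One then applies the general correspondence (e.g.\ \cite[Theorem 4.2.7]{EK}) which associates to any Feller semigroup on $C_0(E)$, and to any initial distribution $\nu\in\mathcal P(E)$, a Markov process $\mathcal X(t)$ with sample paths in $D_E[0,\infty)$. The only mild point to check is that $T(t)$ is conservative, i.e.\ that it extends to a semigroup of Markov (not merely sub-Markov) operators; this follows because $L$ annihilates constants — inspection of \eqref{L-limit}--\eqref{LG} shows $L\mathbf 1=0$ since both $\Theta\cdot\nabla\nabla$ and $L_G$ kill constants — so no mass is lost and $\mathcal X(t)$ does not reach a cemetery.

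For the invariance principle \eqref{TT2} I would proceed in two steps. First, convergence of finite-dimensional distributions: by Theorem \ref{T1} we have $T_\varepsilon(t)\pi_\varepsilon F\to T(t)F$ for every $F\in C_0(E)$ and every $t\ge 0$, and the maps $\pi_\varepsilon:C_0(E)\to C_0(E_\varepsilon)$ satisfy $\sup_\varepsilon\|\pi_\varepsilon\|\le 1$ together with the consistency $\|\pi_\varepsilon F\|_{C_0(E_\varepsilon)}\to\|F\|_{C_0(E)}$ (a consequence of \eqref{hatpi1} and the definition \eqref{1}). Under the hypothesis that $\nu$ is the limit law of $\hat X_\varepsilon(0)$, the semigroup convergence propagates to the one-dimensional time marginals, and then, via the Markov property and the algebra structure of $C_0(E)$, to all finite-dimensional distributions — this is exactly the content of \cite[Theorem 4.2.11]{EK} (or Corollary 8.9 of Chapter 4 there), which states that under these hypotheses $\hat X_\varepsilon\Rightarrow\mathcal X$ in $D_E[0,\infty)$ provided the limit process has sample paths in $D_E[0,\infty)$, which we have just established.

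The second step — and the place where some genuine care is needed — is relative compactness (tightness) of the laws of $\{\hat X_\varepsilon(\cdot)\}$ in $D_E[0,\infty)$. Here I would use the Aldous–Rebolledo type criterion from \cite{EK}: it suffices to exhibit, for a large enough class of test functions $F$ (it is enough to take $F$ in the core $D$ of \eqref{core}), uniform control of the predictable quadratic variation of the martingales $F(\hat X_\varepsilon(t))-F(\hat X_\varepsilon(0))-\int_0^t\hat L_\varepsilon F(\hat X_\varepsilon(s))\,ds$, together with a compact-containment condition. The compact-containment condition is the delicate ingredient because $E$ is noncompact in the $x$-direction: one must show that for every $\eta>0$ and $T>0$ there is a compact $K\subset E$ with $\inf_\varepsilon\mathbb P(\hat X_\varepsilon(t)\in K\ \forall t\le T)\ge 1-\eta$. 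This is obtained by constructing $F_\varepsilon=\pi_\varepsilon F$ for a suitable smooth truncation $F$ of $|x|^2$ (modified as in \eqref{4} by correctors so that $\hat L_\varepsilon F_\varepsilon$ is uniformly bounded, using Proposition \ref{P1} and the moment bound $\int|z|^2a(z)\,dz<\infty$ from \eqref{a-1}), and applying the resulting supermartingale estimate. Given compact containment, the tightness of the real-valued processes $F(\hat X_\varepsilon(\cdot))$ for $F\in D$ follows from the boundedness of $\hat L_\varepsilon F_\varepsilon$ established in the proof of Lemma \ref{Fn}, and then \cite[Theorem 3.9.1 and Theorem 3.9.4]{EK} upgrade this to tightness of $\{\hat X_\varepsilon\}$ in $D_E[0,\infty)$. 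Combining relative compactness with the already-proven convergence of finite-dimensional distributions yields \eqref{TT2}. The main obstacle, as indicated, is verifying the compact-containment condition uniformly in $\varepsilon$; everything else is a bookkeeping application of the general theory in \cite{EK} once Theorem \ref{T1} is in hand.
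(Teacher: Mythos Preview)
Your approach is essentially the paper's: both rest on Theorem~2.11 of Chapter~4 in \cite{EK} (your ``Theorem 4.2.11''). The paper simply verifies its hypotheses --- $E=\mathbb R^d\times G^\star$ is locally compact and separable, $\eta_\varepsilon:E_\varepsilon\hookrightarrow E$ is the embedding, $T(t)$ is Feller by Lemma~\ref{HI}, and the semigroup convergence is exactly Theorem~\ref{T1} --- and concludes immediately. Your entire ``second step'' on tightness and compact containment is superfluous: that theorem in \cite{EK} already delivers both the existence of the limit process with c\`adl\`ag paths \emph{and} the full convergence $\hat X_\varepsilon\Rightarrow\mathcal X$ in $D_E[0,\infty)$, with the compact-containment condition handled internally in its proof precisely because the limit semigroup is Feller on a locally compact space. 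So your corrector-plus-supermartingale argument for compact containment, while a legitimate alternative route, is not needed here; the paper's proof is accordingly much shorter.
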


\begin{proof}
The main idea of the proof is to combine the convergence of the finite dimensional distributions of $\hat X_\varepsilon(t)$ (that is a consequence of Theorem \ref{T1} see Remark \ref{R2}) and the tightness of $\hat X_\varepsilon(t)$ in $D_E [0, \infty)$.

We apply here Theorem 2.11 from \cite{EK}, Chapter 4. For the reader convenience we formulate it here.

\bigskip\noindent
{\bf Theorem} \cite{EK}.{\sl \
Let $E, E_1, E_2, \ldots $ be metric spaces with $E$ locally compact and separable. For $n=1,2, \ldots$ let $\eta_n:E_n \to E$ be measurable, let $\{ T_n(t) \}$ be a semigroup on $B (E_n)$ (a Banach space of bounded functions with sup-norm) given by a transition function, and suppose $Y_n$ is a Markov process in $E_n$ corresponding to  $\{ T_n(t) \}$, such that $X_n = \eta_n \circ Y_n$ has sample paths in  $D_E [0, \infty)$. Define $\pi_n: B(E) \to  B (E_n)$ by $\pi_n f = f \circ \eta_n$. Suppose that $\{T(t)\}$ is a Feller semigroup on $C_0 (E)$ and that for each $f \in C_0(E)$ and $t \ge 0$
\begin{equation}\label{EK1}
\| T_n(t) \pi_n f \ - \ \pi_n T(t) f \| \ \to \ 0.
\end{equation}
If $\{ X_n (0) \}$ has limiting distribution $\nu \in {\cal P} (E)$, then there is a Markov process $X$ corresponding to $\{ T(t) \}$ with initial distribution $\nu$ and sample paths in $D_E [0, \infty)$, and $X_n \Rightarrow X$. }

\bigskip
In our case, $E=\mathbb R^d \times G^\star$, $E_n=E_\varepsilon \subset E, \; \varepsilon = \frac1n$, and $\eta_n = \eta_\varepsilon: \ E_\varepsilon \to E$ is the measurable mapping for every $\varepsilon$, it is embedding of the set $E_\varepsilon$ to $E$.
The Markov process $Y_n(t) = Y_\varepsilon(t)$ is the same as the random jump process $(X_\varepsilon (t)$ with transition rates $p_\varepsilon (x,y)$, and $\hat X_\varepsilon (t) =  \eta_\varepsilon \circ Y_\varepsilon =  (X_\varepsilon (t),  k(X_\varepsilon (t)) )$ is the two-component random jump process on $E_\varepsilon$ (see (\ref{hatL-eps})).
The semigroup $T(t)$ on $C_0(E)$ generated by the operator $L$, see  (\ref{L-limit}) - (\ref{LG}), is the Feller semigroup by the Hille-Yosida theorem as was mentioned in the beginning of Section 2.
The convergence in (\ref{EK1}) is the subject of Theorem \ref{T1}.

Thus, all assumptions of Theorem 2.11 from \cite{EK} are fulfilled. Consequently, the processes $\hat X_\varepsilon (t) =  \eta_\varepsilon \circ Y_\varepsilon$ convergence in law in the space $D_E[0, \infty)$. This completes the proof of Theorem \ref{T2}.
\end{proof}
\medskip

In the conclusion of this section we describe the limit process $\mathcal X(t)$ corresponding to the semigroup $T(t)$ with generator $L$ defined by \eqref{L-limit} -- \eqref{LG}. It is a two component process $\mathcal X(t) = \{ x(t), k(t) \}$ with the first component $x(t)$ taking value in $\mathbb{R}^d$ and the second one $k(t)$ from $G^\star$. Recall that $L$ is a sum of two operators, the first one is the generator of diffusion in $\mathbb{R}^d$, the second one is the generator of random jump process on $G^\star$. Moreover, as follows from  \eqref{L-limit} the generator of diffusion "turns on" only if $k = \star$. Assume that the initial distribution $\nu= \delta_0(x) \, \delta_\star (k)$, so that $\mathcal X(0) = (0, \star)$. The second component $k(t)$ of $\mathcal X(t)$ stays at the point $\star$ a random time and then jumps to an other point in $G^\star$. During these random waiting time  $k = \star$ and the first component $x(t)$ of $\mathcal X(t)$ evolves as a
diffusion generated by the operator $\Theta \cdot \nabla\nabla$. Then $x(t)$ immediately stops moving when the second component $k(t)$ takes values in $G$ (not $\star$). After that, the second component $k(t)$ evolves as a Markov jump process with states in $G^\star$, and after a random time it arrives again at the point  $ \ star $.
Then the first component $x(t)$ of $\mathcal X(t)$ 
"wakes up" and continues the diffusive motion from the place where it stopped earlier.

\section{Memory kernel in evolution of the first (spatial) component of $\mathcal X(t)$}
\label{sec4}

In this section we derive the evolution equation with a memory term for  the first component $x(t)$ of $\mathcal X(t)$.
We consider the parabolic equation
\begin{equation}\label{E1-1}
\frac{\partial}{\partial t} F(x,k,t) = L F(x,k,t), \quad
F(x,k,t) = \left(
\begin{array}{l}
f_0(x,t) \\
g(x, \xi,t)
\end{array}
\right)
\end{equation}
with operator $L$ defined in \eqref{L-limit} - \eqref{LG}.

Let us start with the second line of \eqref{E1-1}:
\begin{equation}\label{E1-2}
\begin{array}{l}
\frac{\partial}{\partial t} g(x,\xi,t) = \int\limits_G \tilde a(\xi - \xi') v(\xi,\xi')(g(x, \xi',t) - g(x, \xi,t)) d\xi' + \tilde c(\xi)(f_0(x,t) - g(x, \xi,t))\\[3mm]
= \mathcal{A} g(x, \xi, t) + \tilde c(\xi)f_0(x,t),
\end{array}
\end{equation}
where $\mathcal{A}$ is a linear bounded operator defined by
\begin{equation}\label{E1-3}
\mathcal{A} g(x, \xi, t) =  \int\limits_G \tilde a(\xi - \xi') v(\xi,\xi')(g(x, \xi',t) - g(x, \xi,t)) d\xi' - \tilde c(\xi) g(x, \xi,t),
\end{equation}
and $\tilde c(\xi)f_0(x,t)$ is the non-homogeneous time dependent term in \eqref{E1-2}.
Let us consider first the homogeneous part of problem \eqref{E1-2}:
$$\frac{\partial}{\partial t} g_0 = \mathcal{A} g_0, $$
 with $ \mathcal{A}$ defined by \eqref{E1-3}.
Note that $\mathcal{A}$ is the sum $\mathcal{A} = B + C$ of the generator $B$ of the Markov jump process $\xi(t)$ on $\overline{G}$ and the operator $C$ of multiplication by the potential $- \tilde c(\xi)$ with $\tilde c(\xi)$ defined in \eqref{b-c}.
Consequently, the solution of the homogeneous part of problem \eqref{E1-2}  can be written as
\begin{equation}\label{E1-4bis}
g_0(x, \xi, t) =  \int\limits_G U(\xi, \xi', t) \pi_1(x,\xi') d \xi', \qquad g_0(x,\xi,0) = \pi_1(x,\xi),
\end{equation}
where
\begin{equation}\label{E1-5bis}
U(\xi, \xi', t) = e^{t \mathcal{A} }(\xi, \xi') =   e^{t(B+C)}(\xi, \xi').
\end{equation}
It is worth noting that the function $U(\xi, \xi', t)$ is exponentially decaying in $t$, since the spectrum of the operator $ \mathcal{A}$ defined by \eqref{E1-3} has a spectral gap: $\sigma(\mathcal{A}) \subset [-r_2, -r_1], \ -r_1 <0$.

Using the Feynman-Kac formula we get
$$
g_0(x, \xi, t) =  \mathbb{E}_{\xi} \Big[ \pi_1(x, \xi(t))\, \exp \big\{ -\int\limits_0^t \tilde c(\xi(\tau)) d \tau \big \} \Big],
$$
where
the average $\mathbb{E}_{\xi}$ is taken over the distribution of trajectories of the process $\{\xi(\tau), \, \tau \in [0,t]\}$ generated by the operator $B$ under the condition that $\xi(0) = \xi$.

The solution of non-homogeneous problem \eqref{E1-2} can be found now by the Duhamel's formula:
\begin{equation}\label{E1-4}
g(x, \xi, t) =  \int\limits_G U(\xi, \xi', t) \pi_1(x,\xi') d \xi' + \int\limits_0^t f_0(x,s) \int\limits_G U(\xi, \xi', t-s) \tilde c(\xi') d \xi' \, ds,
\end{equation}
where $ U(\xi, \xi', t)$ is defined by \eqref{E1-5bis}.

Substituting relation \eqref{E1-4} for $g(x, \xi, t)$ into the first equation in \eqref{E1-1} we get the required evolution equation for the spatial component $x(t)$ of  $\mathcal X(t)$:
\begin{equation}\label{E1-5}
\begin{array}{l}\displaystyle
\frac{\partial}{\partial t} f_0(x, t) = \Theta \cdot \nabla \nabla f_0(x,t) - \lambda(0) f_0(x,t) +
 \int\limits_G  \int\limits_G \tilde b(\xi) U(\xi,\xi',t) \pi_1(x, \xi') d\xi' d \xi \\[3mm]
 \displaystyle
+ \int\limits_0^t \left[  \int\limits_G  \int\limits_G  U(\xi, \xi',t-s) \, \tilde c( \xi') \, \tilde b(\xi)d \xi'd \xi \right] f_0(x,s) ds,
\end{array}
\end{equation}
where $\lambda(0) =  \int\limits_G   \tilde b(\xi)d \xi$ and the memory kernel has the form
$$
K(t-s) =  \int\limits_G  \int\limits_G  U(\xi, \xi',t-s) \, \tilde c( \xi') \, \tilde b(\xi)d \xi'd \xi.
$$

\section{Semigroup in $L^2(E)$}\label{sec5}

In this Section we discuss the construction of the semigroup $T(t)$ with generator $L$ given by \eqref{L-limit}-\eqref{LG} in $L^2(E)$, and pay special attention to the spectral analysis of the generator of this semigroup.
Any function $F \in L^2(E)$ has the form
\begin{equation}\label{Fxk-bis}
F = \big( f_0(x), \  g(x, \xi) \big), \quad f_0 \in L^2(\mathbb R^d), \;  g(x,\xi) \in L^2(\mathbb R^d \times \overline{G} ).
\end{equation}
Considering the semigroup convergence in $L^2(E)$, we drop assumption \eqref{w} but additionally to \eqref{vv} assume that  $w(\xi, \eta) = w(\eta, \xi)$ is a symmetric function on $\mathbb{T}^d \times \mathbb{T}^d$.

The definition of the bounded linear mapping $ \pi_\varepsilon: L^2(E) \to L^2(E_\varepsilon)$ should be slightly modified in the same way as in \cite{AA}.
Namely, for $ x \in \varepsilon \overline{G}^{\sharp} $ we define $ \hat x = \varepsilon \left[  \frac{x}{\varepsilon} \right] \in \varepsilon \mathbb Z^d$, $\xi=\left\{ \frac{x}{\varepsilon} \right\} \in \overline{G}$, and set
\begin{equation}\label{pi_L2}
(\pi_\varepsilon F) (x,  k_\varepsilon(x)) \ = \ \left\{
\begin{array}{ll}
f_0 (x), \; & \mbox{if} \; x \in \varepsilon Y^{\sharp}, \; (\mbox{or } k_\varepsilon(x) =\star); \\[2mm]
\displaystyle
\frac{1}{\eps^d\,|G|}\int_{\eps G} g(\hat x+\eta, \xi)\,d\eta, \; & \mbox{if} \; x-\hat x \in \varepsilon \overline{G}.
\end{array}
\right.
\end{equation}


\begin{theorem}
Assume that $w(\xi, \eta) = w(\eta, \xi)$ and all conditions \eqref{a-1} - \eqref{a-1_low}, \eqref{Lambda-eps} - \eqref{vv} are satisfied.
Then for each $F \in L^2(E)$
$$T_\varepsilon(t) \pi_\varepsilon F \to T(t) F, \quad \forall \  t \ge 0 \quad \mbox{ as } \; \varepsilon \to 0
$$
Moreover, this convergence is uniform on bounded time intervals.
\end{theorem}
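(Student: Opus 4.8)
The plan is to mirror the proof of Theorem \ref{T1}, but now in the Hilbert space setting, where the crucial new ingredient is that, under the symmetry assumption $w(\xi,\eta)=w(\eta,\xi)$, both $\hat L_\varepsilon$ and $L$ become self-adjoint (non-positive) operators, and one can invoke the $L^2$ version of the Trotter--Kato approximation theorem (\cite[Ch.~1]{EK} together with the strong-resolvent-convergence criterion, or directly \cite{AA}). So the first step is to verify that $\hat L_\varepsilon$ is self-adjoint and dissipative on $L^2(E_\varepsilon)$: this follows from the symmetry $a(-z)=a(z)$, the symmetry of $\Lambda_\varepsilon$ inherited from $w(\xi,\eta)=w(\eta,\xi)$ and $\Lambda^0(\xi,\eta)=\mathbf 1_{Y^\sharp}(\xi)\mathbf 1_{Y^\sharp}(\eta)$, and the standard Dirichlet-form representation $-(\hat L_\varepsilon f,f)=\frac{1}{2\varepsilon^{d+2}}\iint a_\varepsilon(x-y)\Lambda_\varepsilon(x,y)|f(y)-f(x)|^2\,dx\,dy\ge0$. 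Likewise $L$ is self-adjoint on $L^2(E)$: the operator $L_G$ has the symmetric Dirichlet form built from $\tilde a(\xi-\xi')v(\xi,\xi')$ and the coupling terms $\tilde b(\xi)$, $\tilde c(\xi)$, which are symmetric precisely because $\tilde b(\xi)=\frac1{|Y|}\int_Y\tilde a(\eta-\xi)v(\eta,\xi)\,d\eta$ matches the off-diagonal coupling of the first component with weight $|G|^{-1}$-normalised — this is the reason the modified $\pi_\varepsilon$ in \eqref{pi_L2} averages $g$ over a cell of $\varepsilon G$.

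The second step is to reduce the semigroup convergence to resolvent convergence: by the Trotter--Kato theorem for self-adjoint contraction semigroups it suffices to show that for some $\lambda>0$ and every $H\in L^2(E)$,
\begin{equation}\label{res-conv-L2}
\big\| (\lambda-\hat L_\varepsilon)^{-1}\pi_\varepsilon H - \pi_\varepsilon(\lambda-L)^{-1}H\big\|_{L^2(E_\varepsilon)}\to0\quad\text{as }\varepsilon\to0,
\end{equation}
and then convergence of $T_\varepsilon(t)\pi_\varepsilon F$ to $T(t)F$ holds for each $t\ge0$, uniformly on bounded time intervals — the uniformity being automatic from the Trotter--Kato conclusion for contraction semigroups. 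Equivalently, following the route already used for Theorem \ref{T1}, one shows the analogue of Lemma \ref{Fn}: for $F$ in a core $D$ of $L$ (now the $L^2$-core, e.g.\ Schwartz functions in $x$ times $C(\overline G)$ in $\xi$), construct $F_\varepsilon\in L^2(E_\varepsilon)$ with $\|F_\varepsilon-\pi_\varepsilon F\|_{L^2(E_\varepsilon)}\to0$ and $\|\hat L_\varepsilon F_\varepsilon-\pi_\varepsilon LF\|_{L^2(E_\varepsilon)}\to0$. The corrector ansatz \eqref{4} is reused verbatim; Propositions \ref{P1} and \ref{P2} already give the needed pointwise (sup-norm) estimates on $\varepsilon Y^\sharp$, which are \emph{stronger} than the $L^2$ estimates one now needs, so the $\varepsilon Y^\sharp$ part is immediate. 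On $\varepsilon G^\sharp$, equation \eqref{8bis} again controls $\hat L_\varepsilon F_\varepsilon$, but now one must match it against $\pi_\varepsilon LF$ where $\pi_\varepsilon$ is the \emph{averaged} map \eqref{pi_L2}; since $g(x,\xi)$ for $F\in D$ is smooth in $x$, replacing $g(\hat x+\eta,\xi)$ by $g(x,\xi)$ inside the cell average costs $O(\varepsilon)$ in sup-norm and hence $o(1)$ in $L^2$, so the discrepancy between the averaged and the naive $\pi_\varepsilon$ vanishes in the limit.

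I expect the main obstacle to be precisely the bookkeeping forced by the modified projection \eqref{pi_L2}: one has to check that the cell-averaging is exactly the adjoint-compatible choice that makes $\pi_\varepsilon$ asymptotically intertwine the two Dirichlet forms, i.e.\ that $-(\hat L_\varepsilon\pi_\varepsilon F,\pi_\varepsilon F)_{L^2(E_\varepsilon)}\to-(LF,F)_{L^2(E)}$, and that $\|\pi_\varepsilon F\|_{L^2(E_\varepsilon)}\to\|F\|_{L^2(E)}$ — the latter requiring the $\varepsilon$-scaling of Lebesgue measure on $\varepsilon\overline G^\sharp$ to reproduce the measure $dx\,d\xi$ on $\mathbb R^d\times\overline G$, which is where the factor $(\varepsilon^d|G|)^{-1}$ and the decomposition $x=\hat x+\varepsilon\xi$ enter. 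A secondary technical point is that, having dropped the continuity assumption \eqref{w} on $w$, one can no longer use Lemma \ref{tilde}; instead $\tilde b,\tilde c\in L^\infty(\overline G)$ suffices for all $L^2$ estimates, and the solvability of the corrector equation \eqref{P2-5} and the bound $K\in L^\infty(\mathbb R^d\times Y)$ from \cite{PSSZ} go through with $w\in L^\infty$. Once \eqref{res-conv-L2} (or the equivalent Lemma \ref{Fn} analogue) is established, the theorem follows from the Trotter--Kato approximation theorem, with uniformity on compact time intervals being part of its statement for contraction semigroups.
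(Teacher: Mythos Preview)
Your proposal is correct and follows essentially the same approach as the paper: the paper's own proof consists of two sentences stating that one reuses the arguments of Theorem \ref{T1} with the corrector ansatz \eqref{4}, which is exactly your strategy. You supply considerably more detail than the paper does---the self-adjointness from the symmetry of $w$, the role of the modified projection \eqref{pi_L2} in matching norms and Dirichlet forms, and the observation that dropping \eqref{w} forces one to work with $L^\infty$ bounds on $\tilde b,\tilde c$ rather than continuity---all of which are correct and fill in what the paper leaves implicit.
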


\begin{proof}
We use the same arguments as in the proof of Theorem \ref{T1}. The crucial step in our constructions is anzatz \eqref{4} that we use for the approximation sequence.
\end{proof}

We proceed with the description of the spectrum of the limit operator  $L$ given by \eqref{L-limit}-\eqref{LG}.
Denote by
\begin{equation}\label{PK}
\begin{array}{c}
(\Phi g)(\xi) = \Phi(\xi) g(\xi), \quad  \Phi(\xi) =  \int\limits_{\mathbb{T}^d} \tilde a(\xi - \xi') v(\xi,\xi') \ d\xi',
\quad g \in L^2(\overline{G}); \\[5mm]
(K g)(\xi) = \int\limits_G \tilde a(\xi - \xi') v(\xi,\xi')g(\xi') \ d\xi', \quad K(\xi, \xi')=  \tilde a(\xi - \xi') v(\xi,\xi'), \quad    K(\xi, \xi')= K(\xi', \xi).
\end{array}
\end{equation}
Our assumptions on functions $a$ and $v$ imply that
$$0 < \gamma_1 \le \Phi(\xi) \le \gamma_2 < \infty \quad \mbox{with } \; \gamma_1 = \alpha_1 \| a\|_{L^1}, \ \gamma_2= \alpha_2 \| a \|_{L^1},$$ where bounds $\alpha_1, \ \alpha_2$ are defined in \eqref{vv}.

\begin{theorem}
Spectrum of the operator $-L$ is the union: $\sigma(-L) = \sigma_1 \cup \sigma_2$, with
\begin{equation}\label{sp1}
{\color{red}\sigma_1 = \sigma (\Phi - K)} \quad \mbox{ and } \quad \sigma_2 = \{ \lambda \ge 0: \ 1 +   \big(\tilde b, R_\lambda(\Phi-K) {\bf 1}  \big) \ge 0 \},
\end{equation}
where $R_\lambda(\Phi-K) =  (\Phi - K - \lambda)^{-1}$ is the resolvent of the operator $\Phi-K$.
In particular, there exist $\lambda_1$ and $\lambda_2$, $\; 0< \lambda_1 < \gamma_1 \le \gamma_2 < \lambda_2$, such that
\begin{equation}\label{sp2}
[0, \lambda_1] \cup {\mbox{Ran}}\, \Phi \cup [\lambda_2, \infty) \ \subset \ \sigma_{cont}(-L).
\end{equation}
\end{theorem}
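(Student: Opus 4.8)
The plan is to exploit the block structure of $L$ acting on $F = (f_0, g) \in L^2(E)$ with $f_0 \in L^2(\mathbb R^d)$ and $g \in L^2(\mathbb R^d\times\overline G)$. After a Fourier transform in the spatial variable $x$, the operator $-L$ becomes a direct integral over $p \in \mathbb R^d$ of operators $-L(p)$ acting on $L^2(\{\star\}\cup\overline G) \cong \mathbb C\oplus L^2(\overline G)$, where the diffusion term $\Theta\cdot\nabla\nabla f_0$ contributes the scalar multiplier $(\Theta p, p)\geq 0$ in the $f_0$-slot. Concretely,
\begin{equation}\label{Lp-block}
-L(p)\begin{pmatrix} f_0 \\ g\end{pmatrix}
=\begin{pmatrix} \big((\Theta p,p) + \lambda(0)\big)f_0 - \int_G \tilde b(\xi)g(\xi)\,d\xi \\[2mm]
(\Phi - K)g - \tilde c(\cdot)f_0 \end{pmatrix},
\end{equation}
using $\int_G\tilde b(\xi)\,d\xi=\lambda(0)$ and the identification $\Phi-K$ from \eqref{PK}; note that $\mathcal A = -(\Phi-K)$ restricted appropriately and that $(\Phi-K){\bf 1}=\tilde c$ by \eqref{b-c}. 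The spectrum of $-L$ is then $\overline{\bigcup_{p}\sigma(-L(p))}$, and since $(\Theta p,p)$ ranges over $[0,\infty)$ this naturally produces a union of a "$g$-driven" part and an "$f_0$-driven" part. I would first treat the purely non-diffusive fiber structure: the part $\sigma_1$ comes from eigenfunctions of the form $(0,g)$ which force $\tilde c\cdot 0 = (\Phi-K)g - \mu g$ only if additionally $\int_G\tilde b g\,d\xi=0$; more robustly, $\sigma_1=\sigma(\Phi-K)$ arises because $\Phi-K$ is the Schur complement / the operator governing the $g$-block, and any spectral value of $\Phi-K$ is inherited by $-L(p)$ for suitable $p$ (choosing $(\Theta p,p)$ to make the $f_0$-equation degenerate or by a direct integral argument over the $x$-variable even without Fourier transform, since $g$ lives over all of $\mathbb R^d$).

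Next I would handle $\sigma_2$ via the Schur complement in the other direction. For $\lambda\notin\sigma(\Phi-K)$, solve the second line of $(-L(p)-\lambda)F=0$ for $g = R_\lambda(\Phi-K)\,\tilde c\, f_0 = f_0\,R_\lambda(\Phi-K)(\Phi-K){\bf 1}$, and substitute into the first line to get the scalar dispersion relation
\begin{equation}\label{disp}
(\Theta p,p) + \lambda(0) - \lambda - \big(\tilde b,\, R_\lambda(\Phi-K)\tilde c\big) = 0.
\end{equation}
Using $(\Phi-K){\bf 1}=\tilde c$ and the resolvent identity $R_\lambda(\Phi-K)(\Phi-K) = I + \lambda R_\lambda(\Phi-K)$, one rewrites $(\tilde b, R_\lambda(\Phi-K)\tilde c) = (\tilde b,{\bf 1}) + \lambda(\tilde b, R_\lambda(\Phi-K){\bf 1}) = \lambda(0) + \lambda(\tilde b, R_\lambda(\Phi-K){\bf 1})$, so \eqref{disp} collapses to $(\Theta p,p) = \lambda\big(1 + (\tilde b, R_\lambda(\Phi-K){\bf 1})\big)$. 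Since $(\Theta p,p)$ sweeps $[0,\infty)$ as $p$ varies, this equation is solvable for some $p$ exactly when $\lambda\big(1+(\tilde b,R_\lambda(\Phi-K){\bf 1})\big)\geq 0$, i.e. (for $\lambda\geq 0$) when $1+(\tilde b,R_\lambda(\Phi-K){\bf 1})\geq 0$; this is precisely $\sigma_2$ as stated in \eqref{sp1}. I would be careful to argue that values $\lambda$ for which the bracket is negative are genuinely in the resolvent set: there the Schur complement is invertible with bounded inverse uniformly in $p$ (the scalar multiplier $(\Theta p,p)-\lambda(1+\cdots)$ stays bounded away from $0$), hence $-L(p)-\lambda$ is boundedly invertible fiberwise with uniformly bounded inverse, giving invertibility of $-L-\lambda$.

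Finally, for the inclusion \eqref{sp2}: $\operatorname{Ran}\Phi\subset\sigma_{\mathrm{cont}}(\Phi-K)$ should follow from the fact that $K$ is a "compact-type" (Hilbert–Schmidt, by continuity/boundedness of the kernel $\tilde a(\xi-\xi')v(\xi,\xi')$ on the bounded set $G$, though $\tilde a$ is only $L^1$ so one may instead invoke an approximation argument) perturbation of the multiplication operator $\Phi$, so the essential spectrum of $\Phi-K$ equals that of $\Phi$, namely the essential range $\operatorname{Ran}\Phi\subset[\gamma_1,\gamma_2]$; since $\sigma_1=\sigma(\Phi-K)\subset\sigma(-L)$ and $\Phi$ has no eigenvalues on its essential range here, these points are continuous spectrum of $-L$. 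The half-lines $[0,\lambda_1]$ and $[\lambda_2,\infty)$ come from $\sigma_2$: the function $\lambda\mapsto 1+(\tilde b,R_\lambda(\Phi-K){\bf 1})$ is continuous and, using $\sigma(\Phi-K)\subset[\gamma_1,\gamma_2]$ together with $\tilde b,\tilde c\geq 0$ and $\|\tilde b\|,\|\tilde c\|$ bounded, one checks it is positive for $\lambda$ near $0$ (value $1+\lambda(0)/\gamma_1$-type bound, strictly positive) and for $\lambda$ large (the resolvent term $\to 0$ like $1/\lambda$, so the bracket $\to 1>0$); hence there is $\lambda_1$ with $[0,\lambda_1]\subset\sigma_2$ and $\lambda_2 > \gamma_2$ with $[\lambda_2,\infty)\subset\sigma_2$, and since $\sigma_2$ consists of continuous spectrum (each such $\lambda$ is a limit of fiber eigenvalues $\lambda(p)$ over a continuum of $p$, with no $L^2(\mathbb R^d)$ eigenfunction) we get \eqref{sp2}. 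The main obstacle I anticipate is making the direct-integral/Fourier reduction rigorous at the level of spectra (justifying $\sigma(-L)=\overline{\bigcup_p\sigma(-L(p))}$ and the continuous-spectrum classification) and handling the fact that $\tilde a\in L^1$ only, so that $K$ need not be compact — one likely needs a density/approximation argument replacing $\tilde a$ by a bounded kernel to pin down $\sigma_{\mathrm{ess}}(\Phi-K)=\operatorname{Ran}\Phi$.
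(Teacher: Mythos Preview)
Your approach is essentially the same as the paper's: both reduce the eigenvalue problem via Schur-complement elimination of the $g$-component to the scalar dispersion relation
\[
(\Theta p,p)=\lambda\big[1+(\tilde b,R_\lambda(\Phi-K)\mathbf 1)\big],
\]
and then use that the spectrum of $-\Theta\cdot\nabla\nabla$ is $[0,\infty)$. Your Fourier/direct-integral language and the paper's direct manipulation of $-LF=\lambda F$ with the ansatz $F=(f_0,\,f_0\mathbf 1+g_1)$ are just two dialects for the same computation; your resolvent identity yielding $(\tilde b,R_\lambda(\Phi-K)\tilde c)=\lambda(0)+\lambda(\tilde b,R_\lambda(\Phi-K)\mathbf 1)$ matches the paper's substitution exactly.

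The one substantive ingredient the paper isolates and you gloss over is a short lemma showing $r(\Phi^{-1}K)<1$. This is what makes the Neumann series for $(I-\Phi^{-1}(K+\lambda))^{-1}$ converge with a \emph{positive} kernel for small $\lambda\ge 0$, giving $(\tilde b,R_\lambda(\Phi-K)\mathbf 1)\ge 0$ directly and hence $[0,\lambda_1]\subset\sigma_2$; the same lemma (applied with $\Phi+\nu$ in place of $\Phi$) is how the paper excludes all $\lambda<0$. Your ``$1+\lambda(0)/\gamma_1$-type bound'' is not quite the right mechanism here --- the positivity really comes from the positivity of the Neumann expansion, which in turn needs the spectral-radius bound. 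Conversely, you are more careful than the paper about the continuous-spectrum classification and about $\mathrm{Ran}\,\Phi\subset\sigma_{\mathrm{ess}}(\Phi-K)$; the paper's proof does not actually address either point, so your discussion of the compactness issue for $K$ (with $\tilde a\in L^1$ only) flags a genuine gap in the published argument rather than in yours.
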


\begin{proof}
The equation $-L F = \lambda F$ with $F=\big( f_0(x), \ f_0(x) {\bf{1}}(\xi) + g_1(x, \xi) \big)$ takes the form
\begin{equation}\label{S1}
\left(
\begin{array}{l}
-\Theta \cdot \nabla \nabla f_0 (x) -  \int\limits_G \tilde b(\xi)g_1(x, \xi) d\xi \\
\int\limits_G \tilde a(\xi - \xi') v(\xi,\xi')\big( g_1(x, \xi) - g_1(x, \xi')\big) d\xi' + \tilde c(\xi) g_1(x, \xi)
\end{array}
\right)
 = \lambda
\left(
\begin{array}{l}
f_0(x) \\ f_0(x) \, {\bf{1}}(\xi) + g_1 (x, \xi)
\end{array}
\right)
\end{equation}
The second line in \eqref{S1} can be rewritten as
\begin{equation}\label{S2}
g_1(x, \xi) = \lambda f_0(x) (\Phi - K - \lambda)^{-1} {\bf{1}}(\xi).
\end{equation}
The elimination of $g_1(x, \xi)$ in the first equation in \eqref{S1} gives
\begin{equation}\label{S3}
\begin{array}{l}
-\Theta \cdot \nabla \nabla f_0 (x) = \lambda f_0(x) +   \lambda f_0(x) \int\limits_G \tilde b(\xi)(\Phi - K - \lambda)^{-1} {\bf{1}}(\xi) d\xi \\
=  \lambda f_0(x) \Big[ 1+   \big( \tilde b, \ R_\lambda(\Phi - K) {\bf{1}} \big)  \big].
\end{array}
\end{equation}
Since the spectrum of the operator $-\Theta \cdot \nabla \nabla$ fills up the positive half-line, we obtain that all
$\lambda$ such that
\begin{equation}\label{S4-bis}
\lambda \Big[ 1+  \int\limits_G \tilde b(\xi)(\Phi - K - \lambda)^{-1} {\bf{1}}(\xi) d\xi \Big] = \lambda \big[ 1+   \big( \tilde b, \ R_\lambda(\Phi - K) {\bf{1}} \big) \big] \ge 0
\end{equation}
belong to the spectrum of $-L$.

\begin{lemma}\label{SL}
The spectral radius of the operator $\Phi^{-1}\, K$ is less then 1: $r(\Phi^{-1}\, K)<1$.
\end{lemma}
\begin{proof}
Since $\Phi^{-1}\, K$ is a positive compact operator, there exists the maximal eigenvalue $\mu \in \mathbb{R}, \ \mu>0$ and the corresponding positive eigenfunction $g_0(\xi)$:
$$
\Phi^{-1}\, K \, g_0 = \mu \, g_0.
$$
This equality is equivalent to
\begin{equation}\label{S5}
(K g_0, g_0) = \mu (\Phi g_0, g_0) = \mu \int\limits_G \Phi(\xi) g_0^2(\xi) d \xi.
\end{equation}
Moreover, using the same reasoning as in Lemma 4.1 in \cite{PZ19} one can prove the following two-sided bound for the function  $g_0(\xi)$:
\begin{equation}\label{g0}
0< \beta_1 \le g_0(\xi) \le \beta_2 \quad \forall \, \xi \in \mathbb{T}^d.
\end{equation}
Let us estimate the left hand side of equation \eqref{S5}:
\begin{equation}\label{S6}
\begin{array}{c} \displaystyle
(K g_0, g_0) = \int\limits_G \int\limits_G  K(\xi, \xi') g_0(\xi') g_0(\xi) d \xi' d \xi \le \frac12 \int\limits_G \int\limits_G  K(\xi, \xi') (g_0^2(\xi')+ g_0^2(\xi)) d \xi' d \xi \\[2mm] \displaystyle
=  \int\limits_G \int\limits_G  K(\xi, \xi') d \xi'  g_0^2(\xi) d \xi <  \int\limits_G \Phi(\xi) g_0^2(\xi) d \xi.
\end{array}
\end{equation}
Here we used \eqref{PK}.
Thus relations \eqref{S5} - \eqref{S6} yield that $r(\Phi^{-1}\, K) = \mu <1$.
\end{proof}

It follows from Lemma \ref{SL} that  $r\big( (\Phi + \nu)^{-1}\, K \big) <1$ for any $\nu>0$. Consequently the Neumann series for the operator $\big( I - (\Phi+\nu)^{-1} K \big)^{-1}$ converges and this operator has a positive kernel. Thus, taking $\lambda = -\nu <0$ we obtain
$$
 \big( \tilde b, \ R_{\lambda}(\Phi - K) {\bf{1}} \big) =  \big( \tilde b, \ R_{-\nu}(\Phi - K) {\bf{1}} \big) =  \big( \tilde b, \ (I - (\Phi+\nu)^{-1} K )^{-1} (\Phi+\nu)^{-1} {\bf{1}} \big) \ge 0,
$$
and
$$
-\nu \big[ 1+  \big( \tilde b, \ R_{-\nu}(\Phi - K) {\bf{1}} \big) \big] <0.
$$
Together with \eqref{S4-bis} this implies that $\lambda<0$ does not belong to the spectrum  of $-L$, and only  $\lambda \ge 0$ such that
\begin{equation}\label{S4}
1+  \int\limits_G \tilde b(\xi)(\Phi - K - \lambda)^{-1} {\bf{1}}(\xi) d\xi = 1+   \big( \tilde b, \ R_\lambda(\Phi - K) {\bf{1}} \big) \ge 0
\end{equation}
belong to the spectrum of $-L$. Thus, \eqref{sp1} is proved.
\medskip

Next we proceed to the study of condition \eqref{S4}.
Since $\Phi - K$ is a bounded operator, we get
$$
\big( \tilde b, \ R_{\lambda}(\Phi - K) {\bf{1}} \big) \to 0 \quad \mbox{as } \;  \lambda \to + \infty.
$$
Thus, there exists $\lambda_2 > \gamma_2$ such that $ |( \tilde b, \ R_{\lambda}(\Phi - K) {\bf{1}} )|<1$ for all $\lambda \ge \lambda_2$, and condition \eqref{S4} yields that $[\lambda_2, + \infty) \subset \sigma (-L)$.

On the other hand,  using Lemma \ref{SL} for small $\lambda>0$ we conclude that there exists
 $\lambda_1>0$ such that for all $\lambda \in [0,\lambda_1]$ Neumann series for the operator $\big( I - \Phi^{-1} (K+\lambda)\big)^{-1}$ converges. Consequently,  this operator has a positive kernel, and
$$
\big( \tilde b, \ R_{\lambda}(\Phi - K) {\bf{1}} \big) =  \big( \tilde b, \ (I - \Phi^{-1} (K+\lambda) )^{-1} \Phi^{-1} {\bf{1}} \big)\ge 0 \quad \mbox{for all } \; \lambda \in [0, \lambda_1].
$$
Thus, $[0, \lambda_1] \subset \sigma (-L)$.

\end{proof}

\section*{Appendix: The proof of Proposition \ref{P1}}

In this section we prove Proposition \ref{P1}. We remind that $f_0$ is a Schwartz class function, the function
$F^P_\eps$ is defined in   \eqref{FP}, and $\Lambda_0(\xi, \eta)=1$  as $ \xi, \eta \in Y$, see \eqref{Lambda0}. Our goal is to determine  periodic bounded functions $\varphi$ and $\varkappa$ and a constant
matrix $\Theta$ so that relation \eqref{PP1} holds true.

First we construct $\varphi=\varphi(\xi), \, \xi \in Y,$ as a periodic solution of the following equation
\begin{equation}\label{efp}
  \int_Y \widetilde{a}(\xi-\eta)\big(\varphi(\eta)-\varphi(\xi)\big)\,d\eta=- \int_{Y^\sharp}  a(\xi-\eta)(\eta-\xi)\,d\eta, \quad \xi \in Y.
\end{equation}
Let us note that \eqref{efp} is a vector equation; equations for each component  $\varphi_j, \ j=1, \ldots d,$ of the function  $\varphi$ are uncoupled and they have the same form as \eqref{efp}:
\begin{equation*}\label{eq_for_phi}
  \int_Y \widetilde{a}(\xi-\eta)\big(\varphi_j(\eta)-\varphi_j(\xi)\big)\,d\eta=- \int_{Y^\sharp}  a(\xi-\eta)(\eta-\xi)_j\,d\eta, \quad \xi \in Y.
\end{equation*}
So we omit further index $j$.

From our assumptions on $a(\cdot)$ it follows that the function in the right-hand side of \eqref{efp} is periodic and bounded on $Y^\sharp$. Moreover, the symmetry of $a(\cdot)$ implies
$$
 \int_{Y}\int_{Y^\sharp} a(\xi-\eta)(\eta-\xi)\,d\eta d\xi=0.
$$
Denote the operator on the left-hand side of  \eqref{efp} by $A^Y$. Then $A^Y$ is a bounded linear
operator in $L^2(Y)$.
Notice that due to condition \eqref{a-1_low} and  regularity of $Y$ there exists $a_0>0$ such that
\begin{equation}\label{a0}
\alpha(\xi):= \int_Y \widetilde{a}(\xi-\eta)\,d\eta\geq a_0>0
\end{equation}
for all $\xi \in Y$.
Therefore, $A^Y$ is a Fredholm operator. Since
$$
(A^Y \varphi,\varphi)_{L^2(Y)}= \int_Y  \int_Y \widetilde{a}(\xi-\eta)\big(\varphi(\eta)-\varphi(\xi)\big)^2\,d\eta d\xi,
$$
the kernel of $A^Y$ consists of constant functions only. By the Fredholm theorem equation \eqref{efp}
has a unique (up to an additive constant) solution $\varphi\in L^2(Y)$.
Using \eqref{a0} and Corollary 5.5. in \cite{PSSZ} we conclude that $\varphi\in L^\infty(Y) \cap L^2(Y)$, i.e.  function $\varphi$ is bounded. Finally, using the same arguments as in Lemma \ref{tilde} we derive from \eqref{efp} that  $\varphi$ is a continuous function: $\varphi\in C(Y)$.

We turn to constructing the matrix valued function $\varkappa (\xi) = \{ \varkappa_{i,j} (\xi) \}, \ \xi \in Y, \ i,j=1,\ldots d$.
It is defined as a periodic solution of the equation
\begin{equation}\label{eq_for_theta}
A^Y \varkappa(\xi)=\Theta-\int\limits_{Y^\sharp}a(\xi-\eta)\big[\frac12(\xi-\eta)\otimes(\xi-\eta)-
(\xi-\eta)\otimes\varphi(\eta)\big]\,d\eta
\end{equation}
with $\varphi$ defined by \eqref{efp}. Again as above, \eqref{eq_for_theta} is the vector equation, and the equation for each component $\varkappa_{i,j} (\xi)$ of $\varkappa (\xi)$ is uncoupled and it has the same form as  \eqref{eq_for_theta}.
 The compatibility condition reads
 \begin{equation}\label{def_effmatr}
 |Y|\Theta_{i,j} = \int\limits_{Y^\sharp}\int\limits_{Y}a(\xi-\eta)\big[\frac12(\xi-\eta)_i (\xi-\eta)_j -
(\xi-\eta)_i \varphi_j(\eta)\big]\,d\eta d\xi.
\end{equation}
Under this choice of $\Theta$ equation \eqref{eq_for_theta} has a periodic solution
$\varkappa\in \big(L^2(Y)\big)^{d^2}$. In the same way as above one can show that
$\varkappa_{i,j}\in L^\infty(Y)$ and then using the same arguments as for the functions $\varphi$ we conclude that $\varkappa_{i,j}\in C(Y)$ for every $i,\ j$,

Moreover, \eqref{def_effmatr} implies that $\Theta$ is a positive definite matrix. This fact easily follows from the identity
$$
 |Y|\Theta=\frac12\int\limits_{Y^\sharp}\int\limits_{Y}a(\xi-\eta)\big[\varphi(\xi)-\varphi(\eta)-(\xi-\eta)\big]\otimes
\big[\varphi(\xi)-\varphi(\eta)-(\xi-\eta)\big] \,d\eta d\xi
$$
using the similar arguments as in \cite{JSP}.

Thus we proved the existence of bounded functions $\varphi_i$ and $\varkappa_{i,j}$ satisfying the equations \eqref{efp} and \eqref{eq_for_theta} respectively. These functions will play the role of correctors when constructing the function $F_\varepsilon$ in \eqref{4}.
By the definition \eqref{hat-L0eps} 
of the operator $\hat L_\varepsilon^0 $
we have
\begin{equation}\label{zzzzz}
\hat L_\varepsilon^0 F_\varepsilon^P (x)=\frac1{\eps^{d+2}}\int\limits_{\eps Y^\sharp}a(\frac{x-y}{\eps})\
\big(F_\varepsilon^P(y)-F_\varepsilon^P(x) \big)\,dy.
\end{equation}
To prove the relation in \eqref{PP1} we  substitute  the expression \eqref{FP} for $F_\varepsilon^P$ in \eqref{zzzzz} and expand $f_0(y) = f_0 (x + \eps z), \ z = \frac{y-x}\eps,$ in Taylor series
as follows
$$
f_0 (x + \eps z)=f_0(x)+\eps z\cdot\nabla f_0(x)+\frac {\eps^2}2
 z\otimes z\cdot\nabla\nabla f_0(x)+ \frac{\eps^3}{3!}z^{\otimes 3} \cdot \nabla \nabla\nabla f_0(x + \eps z \vartheta).
$$
Applying exactly the same arguments as in the proof of Proposition 3 in \cite{JSP} and taking into account \eqref{efp}, \eqref{eq_for_theta} and \eqref{def_effmatr} we obtain
 $$
\hat L_\varepsilon^0 F_\varepsilon^P (x)=\Theta \cdot\nabla\nabla f_0(x)+R_\eps(x),
$$
where $R_\eps$ is such that  $\|R_\eps\|_{L^\infty(Y^\sharp)} \to 0$ as $\eps\to0$.
This complete the proof of Proposition \ref{P1}.

\end{document}